\providecommand{\bysame}{\leavevmode\hbox to3em{\hrulefill}\thinspace}
\providecommand{\MR}{\relax\ifhmode\unskip\space\fi MR }
\providecommand{\href}[2]{#2}
\newtheorem{theorem}{Theorem}[section]
\newtheorem{lemma}[theorem]{Lemma}
\newtheorem{corollary}[theorem]{Corollary}
\newtheorem{proposition}[theorem]{Proposition}
\newtheorem*{WATconjecture}{WAT Conjecture}
\theoremstyle{definition}
\newtheorem{example}[theorem]{Example}
\numberwithin{equation}{section}
\newcommand{\B}{\mathbb{B}}
\newcommand{\C}{\mathbb{C}}
\newcommand{\D}{\mathbb{D}}
\renewcommand{\S}{\mathbb{S}}
\newcommand{\mcE}{\mathcal{E}}
\newcommand{\mcM}{\mathcal{M}}
\newcommand{\N}{\mathbb{N}}
\begin{document}

\title[Toeplitzness of composition operators]{Toeplitzness of composition operators\\ in several variables}

\author{{\v Z}eljko {\v C}u{\v c}kovi{\'c}}
\address{Department of Mathematics and Statistics, Mail Stop 942, University of Toledo, Toledo, OH 43606}
\email{zeljko.cuckovic@utoledo.edu}

\author{Trieu Le}
\address{Department of Mathematics and Statistics, Mail Stop 942, University of Toledo, Toledo, OH 43606}
\email{trieu.le2@utoledo.edu}

\keywords{Composition operator; Asymptotic Toeplitz operator}

\subjclass[2010]{47B33; 47B35}

\begin{abstract} Motivated by the work of Nazarov and Shapiro on the unit disk, we study asymptotic Toeplitzness of composition operators on the Hardy space of the unit sphere in $\C^n$. We extend some of their results but we also show that new phenomena appear in higher dimensions.
\end{abstract}

\maketitle

\section{Introduction}\label{S:intro}

Let $\B_n$ denote the unit ball and $\S_n$ the unit sphere in $\C^n$. We denote by $\sigma$ the  surface area measure on $\S_n$, so normalized that $\sigma(\S_n)=1$. We write $L^{\infty}$ for $L^{\infty}(\S_n,d\sigma)$ and $L^2$ for $L^2(\S_n,d\sigma)$. The Hardy space $H^2$ consists of all analytic functions $h$ on $\B_n$ which satisfy
\begin{equation*}
\|h\|^2 = \sup_{0<r<1}\int_{\S_n}|h(r\zeta)|^2\, d\sigma(\zeta) < \infty.
\end{equation*}
It is well known that such a function $h$ has radial boundary limits almost everywhere. We shall still denote the limiting function by $h$. We then have $h(\zeta) = \lim_{r\uparrow 1}h(r\zeta)$ for a.e. $\zeta\in\S$ and $$\|h\|^2 = \int_{\S_n}|h(\zeta)|^2\, d\sigma(\zeta) = \|h\|^2_{L^2}.$$
From this we may consider $H^2$ as a closed subspace of $L^2$. We shall denote by $P$ the orthogonal projection from $L^2$ onto $H^2$. We refer the reader to  \cite[Section 5.6]{RudinSpringer1980} for more details about $H^2$ and other Hardy spaces.

We shall also need the space $H^{\infty}$, which consists of bounded analytic functions on $\B_n$. As before, we may regard $H^{\infty}$ as a closed subspace of $L^{\infty}$.

For any $f\in L^{\infty}$, the Toeplitz operator $T_{f}$ is defined by $T_{f}h = P(fh)$ for $h$ in $H^2$. It is immediate that $T_{f}$ is bounded on $H^2$ with $\|T_{f}\|\leq\|f\|_{\infty}$. (The equality in fact holds true but it is highly nontrivial. We refer the reader to \cite{DavieJFA1977} for more details.) We call $f$ the symbol of $T_{f}$. The following properties are well known and can be verified easily from the definition of Toeplitz operators.
\begin{enumerate}[(a)]
  \item $T_{f}^{*} = T_{\overline{f}}$ for any $f\in L^{\infty}$.
  \item $T_{f}=M_f$, the multiplication operator with symbol $f$, for any $f\in H^{\infty}$. 
  \item $T_{g}T_{f}=T_{gf}$ and $T_{f}^{*}T_{g} = T_{\overline{f}g}$ for $f\in H^{\infty}$ and $g\in L^{\infty}$.
\end{enumerate}

The other class of operators that we are concerned with in this paper is the class of composition operators. Let $\varphi$ be an analytic mapping from $\B_n$ into itself. We shall call $\varphi$ an analytic selfmap of $\B_n$. We define the composition operator $C_{\varphi}$ by $C_{\varphi}h = h\circ\varphi$ for all analytic functions $h$ on $\B_n$. Note that $C_{\varphi}$ is the identity if and only if $\varphi$ is the identity mapping of $\B_n$. In the one dimensional case, it follows from Littlewood Subordination Principle that $C_{\varphi}$ is a bounded operator on the Hardy space $H^2$. In higher dimensions, $C_{\varphi}$ may not be bounded on $H^2$ even when $\varphi$ is a polynomial mapping. We refer the reader to \cite{CowenCRCP1995,Shapiro1993} for details on composition operators.

\subsection{One dimension}\label{SS:onedim}
In this section we discuss the case of one dimension, that is, $n=1$. It is a well known theorem of Brown and Halmos \cite{Brown1963} back in the sixties that a bounded operator $T$ on $H^2$ is a Toeplitz operator if and only if
\begin{equation}\label{Eqn:ToeplitzChar}
T_{\overline{z}}TT_{z}=T.
\end{equation}
Here $T_{z}$ is the Toeplitz operator with symbol $f(z)=z$ on the unit circle $\mathbb{T}$. This operator is also known as the unilateral forward shift. There is a rich literature on the study of Toeplitz operators and we refer the reader to, for example, \cite{Douglas1972} for more details. 

In their study of the Toeplitz algebra, Barr{\'i}a and Halmos \cite{BarriaTAMS1982} introduced the notion of asymptotic Toeplitz operators. An operator $A$ on $H^2$ is said to be \emph{strongly asymtotically Toeplitz} (``SAT'') if the sequence $\{T_{\overline{z}}^mAT_{z}^m\}_{m=0}^{\infty}$ converges in the strong operator topology. It is easy to verify, thanks to \eqref{Eqn:ToeplitzChar}, that the limit $A_{\infty}$, if exists, is a Toeplitz operator. The symbol of $A_{\infty}$ is called the asymptotic symbol of $A$. Barr{\'i}a and Halmos showed that any operator in the Toeplitz algebra is SAT. 

In \cite{FeintuchOTAA1989}, Feintuch investigated asymptotic Toeplitzness in the uniform (norm) and weak topology as well. An operator $A$ on $H^2$ is \emph{uniformly asymptotically Toeplitz} (``UAT'') (respectively, \emph{weakly asymptotically Toeplitz} (``WAT'')) if the sequence $\{T_{\overline{z}}^mAT_{z}^m\}$ converges in the norm (respectively, weak) topology. It is clear that $${\rm UAT} \Longrightarrow {\rm SAT} \Longrightarrow {\rm WAT}$$ and the limiting operators, if exist, are the same.

The following theorem of Feintuch completely characterizes operators that are UAT. A proof can be found in \cite{FeintuchOTAA1989} or \cite{NazarovCVEE2007}.
\begin{theorem}[Theorem 4.1 in \cite{FeintuchOTAA1989}]\label{T:Feintuch}
An operator on $H^2$ is uniformly asymptotically Toeplitz if and only if it has the form ``Toeplitz + compact''.
\end{theorem}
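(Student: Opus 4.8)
The plan is to prove both implications of the equivalence, abbreviating $T_{\overline{z}}^{m} = (T_{\overline{z}})^{m}$ and $T_{z}^{m} = (T_{z})^{m}$ and exploiting two basic identities for the shift: since $T_{z}$ is an isometry, $T_{\overline{z}}^{m}T_{z}^{m} = I$, while $T_{z}^{m}T_{\overline{z}}^{m} = I - Q_{m}$, where $Q_{m}$ is the orthogonal projection of $H^{2}$ onto the span of $1, z, \ldots, z^{m-1}$, so that $Q_{m}$ has finite rank. I will also use repeatedly the Brown--Halmos identity $T_{\overline{z}}T_{f}T_{z} = T_{f}$, which upon iteration gives $T_{\overline{z}}^{m}T_{f}T_{z}^{m} = T_{f}$ for every symbol $f\in L^{\infty}$ and every $m$.

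For the direction ``Toeplitz $+$ compact $\Rightarrow$ UAT'', suppose $A = T_{f} + K$ with $K$ compact. Then $T_{\overline{z}}^{m}AT_{z}^{m} = T_{f} + T_{\overline{z}}^{m}KT_{z}^{m}$, so it suffices to show the second summand tends to $0$ in norm. The key observation is that $T_{\overline{z}}^{m}\to 0$ in the strong operator topology (as $T_{\overline{z}}$ is the backward shift, $\|T_{\overline{z}}^{m}g\|\to 0$ for every $g$), together with the standard fact that a compact operator turns strong convergence into norm convergence. Hence $\|T_{\overline{z}}^{m}K\|\to 0$, and since $\|T_{z}^{m}\| = 1$ we get $\|T_{\overline{z}}^{m}KT_{z}^{m}\| \le \|T_{\overline{z}}^{m}K\|\to 0$. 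Thus $T_{\overline{z}}^{m}AT_{z}^{m}\to T_{f}$ in norm and $A$ is UAT.

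For the converse, assume $A$ is UAT and set $A_{\infty} = \lim_{m}T_{\overline{z}}^{m}AT_{z}^{m}$ (norm limit). First I would verify that $A_{\infty}$ is Toeplitz. Writing $B_{m} = T_{\overline{z}}^{m}AT_{z}^{m}$, the shift relation gives $T_{\overline{z}}B_{m}T_{z} = B_{m+1}$; letting $m\to\infty$ and using that $\|T_{\overline{z}}B_{m}T_{z} - T_{\overline{z}}A_{\infty}T_{z}\| \le \|B_{m}-A_{\infty}\|\to 0$, we obtain $T_{\overline{z}}A_{\infty}T_{z} = A_{\infty}$. By the Brown--Halmos characterization \eqref{Eqn:ToeplitzChar}, $A_{\infty} = T_{f}$ for some $f\in L^{\infty}$.

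It remains to prove that $K := A - A_{\infty}$ is compact. Since $T_{\overline{z}}^{m}T_{f}T_{z}^{m} = T_{f}$, we have $T_{\overline{z}}^{m}KT_{z}^{m} = B_{m} - T_{f}\to 0$ in norm. The crux of the argument, and the step I expect to be the main obstacle because it is exactly where uniform rather than merely strong or weak convergence is indispensable, is the following: any $K$ with $\|T_{\overline{z}}^{m}KT_{z}^{m}\|\to 0$ must be compact. To see this, fix $\epsilon>0$ and choose $m$ with $\|T_{\overline{z}}^{m}KT_{z}^{m}\|<\epsilon$. Using $I = Q_{m} + T_{z}^{m}T_{\overline{z}}^{m}$ and expanding $K = (Q_{m} + T_{z}^{m}T_{\overline{z}}^{m})K(Q_{m} + T_{z}^{m}T_{\overline{z}}^{m})$, the three terms carrying a factor $Q_{m}$ are of finite rank, while the remaining term equals $T_{z}^{m}(T_{\overline{z}}^{m}KT_{z}^{m})T_{\overline{z}}^{m}$, whose norm is at most $\|T_{\overline{z}}^{m}KT_{z}^{m}\|<\epsilon$. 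Hence $K$ lies within $\epsilon$ of a finite-rank operator for every $\epsilon>0$, so $K$ belongs to the norm closure of the finite-rank operators; that is, $K$ is compact. This yields $A = T_{f} + K$ and completes the proof.
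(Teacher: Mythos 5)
Your proof is correct. There is nothing in the paper to compare it against: the paper states this result as a quotation of Feintuch's theorem and refers to \cite{FeintuchOTAA1989} and \cite{NazarovCVEE2007} for a proof rather than giving one. Your argument is essentially the standard one found in those references: the forward direction via $T_{\overline{z}}^{m}\to 0$ strongly plus the fact that composing with a compact operator upgrades strong convergence to norm convergence, and the converse via the decomposition $I = Q_{m} + T_{z}^{m}T_{\overline{z}}^{m}$, which exhibits $K = A - A_{\infty}$ as a norm limit of finite-rank operators; both halves are complete as written.
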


Recently Nazarov and Shapiro \cite{NazarovCVEE2007} investigated the asymptotic Toeplitzness of composition operators and their adjoints. They obtained many interesting results and open problems. We list here a few of their results, which are relevant to our work. 

\begin{theorem}[Theorem 1.1 in \cite{NazarovCVEE2007}]\label{T:NS}
$C_{\varphi}$ = ``Toeplitz + compact'' (or equivalently by Feintuch's Theorem, $C_{\varphi}$ is UAT) if and only if $C_{\varphi}=I$ or $C_{\varphi}$ is compact.
\end{theorem}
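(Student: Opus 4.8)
The plan is to prove the stated equivalence directly in its ``Toeplitz $+$ compact'' form, invoking Feintuch's Theorem~\ref{T:Feintuch} only for the restatement as UAT. One implication is immediate: if $C_{\varphi}=I$ then $C_{\varphi}=T_{1}$ is Toeplitz (by property (b)), and if $C_{\varphi}$ is compact then $C_{\varphi}=T_{0}+C_{\varphi}$ is trivially of the form ``Toeplitz $+$ compact''. The substance is the converse, and I would handle it with the Berezin transform.

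Suppose $C_{\varphi}=T_{f}+K$ with $f\in L^{\infty}$ and $K$ compact. Writing $K_{w}(z)=(1-\bar wz)^{-1}$ for the reproducing kernel of $H^2$ and $k_{w}=(1-|w|^2)^{1/2}K_{w}$ for its normalization, I would first record two standard facts: (i) $C_{\varphi}^{*}K_{w}=K_{\varphi(w)}$, which follows by testing against an arbitrary $h\in H^2$ and using $(C_{\varphi}h)(w)=h(\varphi(w))$; and (ii) $k_{w}\to 0$ weakly as $|w|\to 1$, proved by checking $\langle h,k_{w}\rangle\to 0$ on the dense set of polynomials. From (i) a direct computation gives
\[
\langle C_{\varphi}k_{w},k_{w}\rangle=(1-|w|^2)\,\langle K_{w},K_{\varphi(w)}\rangle=\frac{1-|w|^2}{1-w\,\overline{\varphi(w)}}.
\]
On the other hand, since $|k_{w}(\zeta)|^2\,d\sigma(\zeta)$ is the Poisson kernel at $w$, the Berezin transform $\tilde f(w)=\langle T_{f}k_{w},k_{w}\rangle$ is exactly the harmonic extension of $f$, and so has nontangential boundary limits equal to $f$ almost everywhere by Fatou's theorem. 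Because $K$ is compact and $k_{w}\to 0$ weakly, $\|Kk_{w}\|\to 0$, hence $\langle Kk_{w},k_{w}\rangle\to 0$ as $|w|\to 1$. Combining these,
\[
\frac{1-|w|^2}{1-w\,\overline{\varphi(w)}}-\tilde f(w)\longrightarrow 0\qquad(|w|\to 1).
\]

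I would then extract pointwise boundary information by taking $w=r\zeta$ with $r\uparrow 1$. At almost every $\zeta\in\mathbb{T}$ the radial limits $\varphi(r\zeta)\to\varphi(\zeta)$ and $\tilde f(r\zeta)\to f(\zeta)$ both exist. If $\varphi(\zeta)\neq\zeta$, then $\zeta\,\overline{\varphi(\zeta)}\neq 1$ (using $|\zeta|=1$ and $|\varphi(\zeta)|\le 1$), so the denominator stays bounded away from $0$ while the numerator tends to $0$; the displayed limit then forces $f(\zeta)=0$. Thus $f$ vanishes almost everywhere off the boundary fixed-point set $F=\{\zeta\in\mathbb{T}:\varphi(\zeta)=\zeta\}$, and the proof splits according to whether $\sigma(F)=0$. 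If $\sigma(F)=0$, then $f=0$ a.e., so $T_{f}=0$ and $C_{\varphi}=K$ is compact. If instead $\sigma(F)>0$, consider $g=\varphi-z\in H^{\infty}\subset H^2$, whose boundary values vanish on $F$; by the uniqueness theorem for $H^2$ (a function vanishing on a set of positive measure is identically $0$), $g\equiv 0$, so $\varphi$ is the identity map and $C_{\varphi}=I$.

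The crux of the argument is the first step: converting the operator relation $C_{\varphi}-T_{f}\in\mathcal{K}$ into a pointwise almost-everywhere boundary identity. What makes this tractable is that $\langle C_{\varphi}k_{w},k_{w}\rangle$ collapses to the elementary expression above and that the Toeplitz Berezin transform is simply the Poisson extension of $f$, so that the compactness of $K$ and the weak nullity of $k_{w}$ can be read off as a genuine limit relation. Once this identity is in hand, the boundary fixed-point dichotomy together with the $H^2$ uniqueness theorem finishes the proof with no further analytic difficulty.
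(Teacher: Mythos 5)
Your proof is correct, up to one harmless slip: by the reproducing property, $\langle K_w,K_{\varphi(w)}\rangle=K_w(\varphi(w))=\bigl(1-\bar{w}\varphi(w)\bigr)^{-1}$, not $\bigl(1-w\overline{\varphi(w)}\bigr)^{-1}$; since the denominator stays away from $0$ exactly when $\varphi(\zeta)\neq\zeta$ in either form, nothing downstream changes. However, your route is genuinely different from the paper's. The paper does not reprove the one-dimensional theorem by classical function theory; it proves the all-dimensions generalization (Theorem \ref{T:cptPlusToeplitz}) by a purely operator-theoretic argument that the authors note also covers $n=1$: Lemma \ref{L:limitCompact} shows $\|\Phi^m(K)\|\to 0$ for every compact $K$ (reduce to rank-one $u\otimes v$ with polynomial entries, which $\Phi^m$ kills for large $m$), so $C_{\varphi}=T_f+K$ is UAT with asymptotic symbol $f$; on the other hand, Corollary \ref{C:MSAT}, obtained by Ces\`aro-averaging the identity $\Phi^m(C_{\varphi})=T_{\langle\varphi,z\rangle^m}C_{\varphi}$ of \eqref{Eqn:formulaPhi}, shows every non-identity bounded $C_{\varphi}$ is MSAT with asymptotic symbol zero; uniqueness of the asymptotic symbol then forces $f=0$, i.e.\ $C_{\varphi}=K$. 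Your argument instead converts $C_{\varphi}-T_f\in\mathcal{K}(H^2)$ into a pointwise boundary identity via the Berezin transform, Fatou's theorem, and the $H^2$ boundary uniqueness theorem. What each buys: the paper's proof is dimension-free and very short once the MSAT machinery is in place, precisely because it never touches explicit kernels; your proof is self-contained and elementary, and it extracts more concrete information (namely $f=0$ a.e.\ off the boundary fixed-point set), but as written it is strictly one-dimensional --- the formula $K_w(z)=(1-\bar{w}z)^{-1}$, the identity ``Berezin of $T_f$ $=$ Poisson extension of $f$,'' and Fatou's theorem would all need their $\B_n$ (invariant Poisson kernel) analogues, and boundedness of $C_{\varphi}$ would have to be assumed, to reach the paper's general statement. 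It is worth noticing that both proofs ultimately rest on the same fact: the set $\{\zeta:\varphi(\zeta)=\zeta,\ |\varphi(\zeta)|=1\}$ has measure zero unless $\varphi$ is the identity --- you invoke it as the uniqueness theorem applied to $\varphi(z)-z$, while the paper invokes Rudin's Theorem 5.5.9 to identify the MSAT limit $T_{\chi_{E(\varphi,\mathrm{id})}}C_{\varphi}$ as $0$.
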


It is easy \cite[page 7]{NazarovCVEE2007} to see that if $\omega\in\partial\mathbb{D}\backslash\{1\}$ and $\varphi(z)=\omega z$ (such a $\varphi$ is called a rotation), then $C_{\varphi}$ is not WAT. On the other hand, Nazarov and Shapiro showed that for several classes of symbols $\varphi$, the operator $C_{\varphi}$ is WAT and the limiting operator is always zero. The following conjecture appeared in \cite{NazarovCVEE2007}.

\begin{WATconjecture} If $\varphi$ is neither a rotation nor the identity map, then $C_{\varphi}$ is WAT with asymptotic symbol zero.
\end{WATconjecture}

We already know that the conjecture holds when $C_{\varphi}$ is a compact operator. Nazarov and Shapiro showed that the conjecture also holds when (a) $\varphi(0)=0$; or (b) $|\varphi|=1$ on an open subset $V$ of $\mathbb{T}$ and $|\varphi|<1$ a.e. on $\mathbb{T}\backslash V$.

For the strong asymptotic Toeplitzness of composition operators, Nazarov and Shapiro proved several positive results. On the other hand, they showed that if $\varphi$ is a non-trivial automorphism of the unit disk, then $C_{\varphi}$ is not SAT. Later, {\v C}u{\v c}kovi{\'c} and Nikpour \cite{Cuckovic2010} proved that $C_{\varphi}^{*}$ is not SAT either. We combine these results into the following theorem.

\begin{theorem}\label{T:ZSCN}
Suppose $\varphi$ is a non-identity automorphism of $\D$. Then $C_{\varphi}$ and $C_{\varphi}^{*}$ are not SAT.
\end{theorem}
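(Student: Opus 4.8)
The plan is to disprove strong convergence of the sequence $A_m := T_{\overline{z}}^m\, C_\varphi\, T_z^m$ by testing it on the single vector $1$. Since $C_\varphi$ is SAT precisely when $\{A_m h\}$ converges in $H^2$ for every $h$, it suffices to exhibit one $h$ for which $\{A_m h\}$ has no norm limit. First I would record the operator identity that makes everything computable. Because $C_\varphi T_z^m h = (z^m h)\circ\varphi = \varphi^m\,(h\circ\varphi)$ and $\varphi^m\in H^\infty$, we have $C_\varphi T_z^m = T_{\varphi^m}C_\varphi$; combining this with property (c) (taking $f=z^m$, $g=\varphi^m$) gives $T_{\overline{z}}^m T_{\varphi^m} = T_{(\overline{z}\varphi)^m}$, so
\begin{equation*}
A_m = T_{(\overline{z}\varphi)^m}\,C_\varphi, \qquad A_m^* = C_\varphi^*\,T_{(z\overline{\varphi})^m}.
\end{equation*}
As an automorphism is inner, $|\varphi|=1$ a.e.\ on $\mathbb{T}$, so $\overline{z}\varphi$ is unimodular. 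Applying the identities to $1$ (and using $C_\varphi 1=1$) gives $A_m 1 = P\big((\overline{z}\varphi)^m\big)$ and $A_m^* 1 = C_\varphi^*\,P\big((z\overline{\varphi})^m\big)$; writing $\varphi^m = \sum_{k\ge0}\widehat{\varphi^m}(k)\,z^k$ one then reads off $\langle A_m 1, z^i\rangle = \widehat{\varphi^m}(i+m)$, together with $\|A_m 1\|^2 = \sum_{k\ge m}|\widehat{\varphi^m}(k)|^2$ and $\|P((z\overline{\varphi})^m)\|^2 = \sum_{k\le m}|\widehat{\varphi^m}(k)|^2$.

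Everything now hinges on how the $\ell^2$-mass of the Taylor coefficients of $\varphi^m$ sits relative to the index $m$. Writing $\varphi(z)=\lambda\frac{a-z}{1-\overline{a} z}$, the rotation case $a=0$ is immediate (and already fails even in the weak operator topology): there $\varphi^m=\lambda^m z^m$, so $A_m 1 = \lambda^m$ and $A_m^* 1 = \overline{\lambda}^m$ oscillate on the unit circle. For a non-rotation automorphism, $a\ne0$, I would study the probability measures $\nu_m = \sum_k|\widehat{\varphi^m}(k)|^2\,\delta_{k/m}$. The key step is that all moments of $\nu_m$ converge: using the expansion $\big(z\tfrac{d}{dz}\big)^j\varphi^m = m^j\,\varphi^m\,(z\varphi'/\varphi)^j + O(m^{j-1})$ and the boundary identity $z\varphi'/\varphi = \varrho_a$ on $\mathbb{T}$, where $\varrho_a(\theta)=\frac{1-|a|^2}{|1-\overline{a} e^{i\theta}|^2}$ is the Poisson kernel, one obtains
\begin{equation*}
\sum_k (k/m)^j\,|\widehat{\varphi^m}(k)|^2 = m^{-j}\,\big\langle (z\tfrac{d}{dz})^j\varphi^m,\ \varphi^m\big\rangle \longrightarrow \int_0^{2\pi}\varrho_a(\theta)^j\,\frac{d\theta}{2\pi}.
\end{equation*}
Since $\varrho_a$ is bounded, moment convergence upgrades to weak convergence $\nu_m\Rightarrow\mu$, where $\mu$ is the law of $\varrho_a(\Theta)$ for $\Theta$ uniform on $[0,2\pi)$. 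As $\varrho_a$ is non-constant and ranges over the interval $\big[\tfrac{1-|a|}{1+|a|},\tfrac{1+|a|}{1-|a|}\big]$, which straddles $1$, the limit $\mu$ is atomless with $1$ in the interior of its support; hence $c:=\mu\big([1,\infty)\big)\in(0,1)$.

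From here the conclusion is quick. The portmanteau theorem gives $\|A_m 1\|^2 = \nu_m([1,\infty))\to c$ and $\|P((z\overline{\varphi})^m)\|^2 = \nu_m((-\infty,1])\to 1-c$, both positive. At the same time, for any fixed $L$ the mass $\sum_{i=0}^{L}|\widehat{\varphi^m}(i+m)|^2 = \nu_m\big([1,1+\tfrac{L}{m}]\big)\to\mu(\{1\})=0$, so each coordinate $\widehat{\varphi^m}(i+m)\to0$. Thus $A_m 1\rightharpoonup0$ weakly while $\|A_m 1\|\to\sqrt{c}>0$, which rules out a norm limit and shows $C_\varphi$ is not SAT. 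For the adjoint, the same coordinatewise estimate gives $P((z\overline{\varphi})^m)\rightharpoonup0$; now I would invoke that $C_\varphi$ is invertible with $C_\varphi^{-1}=C_{\varphi^{-1}}$, so $C_\varphi^*$ is bounded below. This forces $A_m^* 1\rightharpoonup0$ yet $\liminf_m\|A_m^* 1\|\ge\|C_{\varphi^{-1}}\|^{-1}\sqrt{1-c}>0$, again excluding a norm limit, so $C_\varphi^*$ is not SAT either.

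The hard part is the moment identity in the second paragraph: one must justify the expansion of $(z\frac{d}{dz})^j\varphi^m$, check that the zero of $\varphi^m$ at $z=a$ cancels the poles of $(z\varphi'/\varphi)^j$, and show the lower-order terms contribute $O(m^{-1})$ after pairing with $\varphi^m$ and rescaling, uniformly enough to pass to the limit for each $j$; one then argues that convergence of moments to the compactly supported (hence determinate) law $\mu$ yields the weak convergence and atomlessness used above. A secondary point worth emphasizing is that strong convergence does not pass to adjoints, so $C_\varphi$ and $C_\varphi^*$ genuinely require separate treatment; the additional ingredient for $C_\varphi^*$ is the invertibility of $C_\varphi$, which is exactly what lets the weak-limit-zero/positive-norm obstruction survive application of $C_\varphi^*$.
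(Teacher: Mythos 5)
The first thing to note is that the paper contains no proof of Theorem \ref{T:ZSCN} to compare against: the theorem is imported, the $C_{\varphi}$ half from Nazarov--Shapiro \cite{NazarovCVEE2007} and the $C_{\varphi}^{*}$ half from \cite{Cuckovic2010}, so your argument stands or falls on its own. It stands. Your identity $A_m=T_{(\overline z\varphi)^m}C_{\varphi}$ is exactly the $n=1$, $\eta=\mathrm{id}$, $g=1$ instance of the paper's formula \eqref{Eqn:formulaPhi}; the reduction of non-SAT to ``$A_m1\rightharpoonup 0$ weakly while $\|A_m1\|\not\to 0$'' is valid; and the technical core you flag does go through: writing $D=z\,d/dz$ and $\psi=z\varphi'/\varphi$, induction gives $D^j\varphi^m=\varphi^m\sum_{i\le j}m^i q_{j,i}$ with $q_{j,j}=\psi^j$ and recursion $q_{j+1,i}=\psi q_{j,i-1}+Dq_{j,i}$, the $q_{j,i}$ independent of $m$; since $\psi$ is holomorphic in a neighborhood of $\mathbb{T}$ (its poles $a$ and $1/\overline{a}$ are off the circle) and $|\varphi^m|=1$ a.e.\ on $\mathbb{T}$, pairing with $\varphi^m$ and dividing by $m^j$ leaves an $O(1/m)$ error, while the boundary identity $\psi=\varrho_a$ on $\mathbb{T}$ is a one-line computation from $\varphi'(z)=\lambda(|a|^2-1)/(1-\overline a z)^2$. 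Two spots should be tightened in a final write-up. First, atomlessness of $\mu$ does not follow merely from $\varrho_a$ being non-constant; but all you actually use is $\mu(\{1\})=0$, and $\{\varrho_a=1\}=\{\theta:\cos(\theta-\arg a)=|a|\}$ is a two-point set, which is also exactly what legitimizes the portmanteau limits $\nu_m([1,\infty))\to c$ and $\nu_m([1,1+L/m])\to 0$. Second, the passage from moment convergence to $\nu_m\Rightarrow\mu$ should explicitly invoke determinacy of the compactly supported limit law (Fr\'echet--Shohat). Compared with the cited proofs, your route buys two real things: it is a single self-contained argument that dispatches $C_{\varphi}$ and $C_{\varphi}^{*}$ simultaneously (historically the adjoint required a separate, later paper, precisely because, as you note, strong convergence does not pass to adjoints and the extra ingredient is invertibility of $C_{\varphi}$), and it is quantitative rather than purely obstructive --- it identifies $\lim_m\|\Phi^m(C_{\varphi})1\|=\sqrt{c}$ with $c=\mu([1,\infty))=\arccos(|a|)/\pi\in(0,1/2)$, together with the corresponding lower bound $\|C_{\varphi^{-1}}\|^{-1}\sqrt{1-c}$ for the adjoint.
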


A more general notion of asymptotic Toeplitzness has been investigated by Matache in \cite{MatacheCHA2007}. An operator $S$ on $H^2$ is called a (generalized) unilateral forward shift if $S$ is an isometry and the sequence $\{S^{*m}\}$ converges to zero in the strong operator topology. An operator $A$ is called uniformly (strongly or weakly) $S$-asymptotically Toeplitz if the sequence $\{S^{*m}AS^m\}$ has a limit in the norm (strong or weak) topology. Among other things, the results in \cite{MatacheCHA2007} on the $S$-asymptotic Toeplitzness of composition operators generalize certain results in \cite{NazarovCVEE2007}.

\subsection{Higher dimensions}\label{SS:higherDimensions}

Motived by Nazarov and Shapiro's work discussed in the previous section, we would like to study the asymptotic Toeplitzness of composition operators on the Hardy space $H^2$ over the unit sphere in higher dimensions.  

To define the notion of asymptotic Toeplitzness, we need a characterization of Toeplitz operators. Such a characterization, which generalizes \ref{Eqn:ToeplitzChar}, was found by Davie and Jewell \cite{DavieJFA1977} back in the seventies. They showed that a bounded operator $T$ on $H^2$ is a Toeplitz operator if and only if $T = \sum_{j=1}^{n}T_{\overline{z}_j}TT_{z_j}$.

We define a linear operator $\Phi$ on the algebra $\mathcal{B}(H^2)$ of all bounded linear operators on $H^2$ by
\begin{equation}
\Phi(A) = \sum_{j=1}^{n}T_{\overline{z}_j}AT_{z_j},
\end{equation}
for any $A$ in $\mathcal{B}(H^2)$. It is clear that $\Phi$ is a positive map (that is, $\Phi(A)\geq 0$ whenever $A\geq 0$) and $\Phi$ is continuous in the weak operator topology of $\mathcal{B}(H^2)$. Let $S$ be the column operator whose components are $T_{z_1}, \ldots, T_{z_n}$. Then $S$ maps $H^2$ into the direct sum $(H^2)^{n}$ of $n$ copies of $H^2$.  In dimension $n=1$, the operator $S$ is the familiar forward unilateral shift. The adjoint $S^{*}=[T_{\overline{z}_1} \ldots T_{\overline{z}_n}]$ is a row operator from $(H^2)^n$ into $H^2$. Since $$S^{*}S = T_{\overline{z}_1}T_{z_1}+\cdots +T_{\overline{z}_n}T_{z_n} = T_{\overline{z}_1z_1+\cdots+\overline{z}_nz_n}=I,$$
we see that $S$ is a co-isometry. In particular, we have $\|S\|=\|S^{*}\|=1$.

From the definition of $\Phi$, we may write
\begin{equation*}
\Phi(A) = [T_{\overline{z}_1} \ldots T_{\overline{z}_n}]
\begin{bmatrix}
A & 0 & \dots & 0\\
0 & A & \dots & 0\\
\hdotsfor[2]{4}\\
0 & 0 & \dots & A
\end{bmatrix}
\begin{bmatrix}
T_{z_1}\\
\vdots\\
\vdots\\
T_{z_n}
\end{bmatrix}
= S^{*}\begin{bmatrix}
A & 0 & \dots & 0\\
0 & A & \dots & 0\\
\hdotsfor[2]{4}\\
0 & 0 & \dots & A
\end{bmatrix}
S.
\end{equation*}
It follows that $\|\Phi(A)\|\leq\|S^{*}\|\|A\|\|S\|\leq\|A\|$ for any $A$ in $\mathcal{B}(H^2)$. Hence $\Phi$ is a contraction. For any positive integer $m$, put $\Phi^m = \Phi\circ\cdots\circ\Phi$, the composition of $m$ copies of $\Phi$. Then we also have $\|\Phi^m(A)\|\leq\|A\|$.

The aforementioned Davie--Jewell's result shows that a bounded operator $T$ is a Toeplitz operator on $H^2$ if and only if $T$ is a fixed point of $\Phi$, which implies that $\Phi^m(T)=T$ for all positive integers $m$. 

We now define the notion of asymptotic Toeplitzness. An operator $A$ on $H^2$ is \textit{uniformly asymptotically Toeplitz} (``UAT'') (respectively, \textit{strongly asymptotically Toeplitz} (``SAT'') or \textit{weakly asymptotically Toeplitz} (``WAT'')) if the sequence $\{\Phi^m(A)\}$ converges in the norm topology (respectively, strong operator topology or weak operator topology). As in the one dimensional case, it is clear that $${\rm UAT} \Longrightarrow {\rm SAT} \Longrightarrow {\rm WAT}$$ and the limiting operators, if exist, are the same. Let $A_{\infty}$ denote the limiting operator. It follows from the continuity of $\Phi$ in the weak operator toplogy that $\Phi(A_{\infty})=A_{\infty}$. Therefore, $A_{\infty}$ is a Toeplitz operator. Write $A_{\infty}=T_{g}$ for some bounded function on $\S_n$. We shall call $g$ the asymptotic symbol of $A$.

{
In the definition of the map $\Phi$ (and hence the notion of Toeplitzness), we made use of the coordinate functions $z_1,\ldots,z_n$. It turns out that a unitary change of variables gives rise to the same map. More specifically, if $\{u_1,\ldots, u_n\}$ is any orthonormal basis of $\mathbb{C}^n$ and we define $f_j(z)=\langle z,u_j\rangle$ for $j=1,\ldots, n$ then a direct calculation shows that
\begin{align*}
\Phi(A) & = \sum_{j=1}^{n}T_{\overline{f}_j}AT_{f_j}
\end{align*}
for every bounded linear operator $A$ on $H^2$.
}

The rest of the paper is devoted to the study of the Toeplitzness of composition operators in several variables. Our focus is on strong and uniform asymptotic Toeplitzness. It turns out that while some results are analogous to the one dimensional case, other results are quite different.

\section{Strong asymptotic Toeplitzness}\label{S:SAT}
Let $\varphi = (\varphi_1,\ldots,\varphi_n)$ and $\eta = (\eta_1,\ldots,\eta_n)$ be two analytic selfmaps of $\B_n$. We also use $\varphi$ and $\eta$ to denote their radial limits at the boundary. For the rest of the paper, we will assume that both composition operators $C_{\varphi}$ and $C_{\eta}$ are bounded on the Hardy space $H^2$. (Recall that in dimensions greater than one, composition operators may not be bounded. See \cite[Section 3.5]{CowenCRCP1995} for more details.) Suppose $g$ is a bounded measurable function on $\S_n$. Using the identities $C_{\varphi}T_{z_j} = T_{\varphi_j}C_{\varphi}$ and $T_{\overline{z}_j}C^{*}_{\eta} = C^{*}_{\eta}T_{\overline{\eta}_j}$ for $j=1,\ldots, n$, we obtain
\begin{align*}
\Phi(C_{\eta}^{*}T_{g}C_{\varphi}) & = \sum_{j=1}^{n}T_{\overline{z}_j}C_{\eta}^{*}T_{g}C_{\varphi}T_{z_j} = \sum_{j=1}^{n}C_{\eta}^{*}T_{\overline{\eta}_j}T_{g}T_{\varphi_j}C_{\varphi}\\
& = C_{\eta}^{*}\Big(\sum_{j=1}^{n}T_{\overline{\eta}_jg\varphi_j}\Big)C_{\varphi} = C_{\eta}^{*}T_{g\langle\varphi,\eta\rangle}C_{\varphi}.
\end{align*}
Here $\langle\varphi,\eta\rangle$ is the inner product of $\varphi=\langle\varphi_1,\ldots,\varphi_n\rangle$ and $\eta=\langle\eta_1,\ldots,\eta_n\rangle$ as vectors in $\C^n$.
By induction, we conclude that
\begin{equation}\label{Eqn:formulaPhi}
\Phi^m(C_{\eta}^{*}T_{g}C_{\varphi}) = C_{\eta}^{*}T_{g\langle\varphi,\eta\rangle^m}\ C_{\varphi} \quad\text{  for any } m\geq 1.
\end{equation}

As an immediate application of the formula \ref{Eqn:formulaPhi}, we show that certain products of Toeplitz and composition operators on $H^2$ are SAT.

\begin{proposition}\label{P:SAT_zero_ae} Suppose that $|\langle\varphi,\eta\rangle|<1$ a.e. on $\S_n$. Then for any bounded function $g$ on $\S_n$, the operator $C^{*}_{\eta}T_{g}C_{\varphi}$ is SAT with asymptotic symbol zero.
\end{proposition}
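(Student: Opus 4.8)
The plan is to combine the key identity \eqref{Eqn:formulaPhi} with a routine dominated-convergence argument. Writing $\psi = \langle\varphi,\eta\rangle$ for brevity, that identity gives
$$\Phi^m(C_{\eta}^{*}T_{g}C_{\varphi}) = C_{\eta}^{*}T_{g\psi^m}C_{\varphi}\qquad\text{for all } m\geq 1.$$
Since being SAT with asymptotic symbol zero means exactly that this sequence converges to $0$ in the strong operator topology, it suffices to fix $h\in H^2$ and prove that $\|C_{\eta}^{*}T_{g\psi^m}C_{\varphi}h\|\to 0$ as $m\to\infty$.

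First I would peel off the outer factors. Because $C_{\eta}$ is bounded, $\|C_{\eta}^{*}\|=\|C_{\eta}\|$ is a finite constant independent of $m$, so
$$\|C_{\eta}^{*}T_{g\psi^m}C_{\varphi}h\|\leq \|C_{\eta}\|\,\|T_{g\psi^m}C_{\varphi}h\|.$$
Next, writing $f=C_{\varphi}h\in H^2\subset L^2$ and using $T_{g\psi^m}f=P(g\psi^m f)$ together with $\|P\|\leq 1$, I would bound
$$\|T_{g\psi^m}f\|^2\leq \|g\psi^m f\|_{L^2}^2\leq \|g\|_{\infty}^2\int_{\S_n}|\psi|^{2m}\,|f|^2\,d\sigma.$$

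The final step is to show this integral tends to zero. Here the tool is the dominated convergence theorem. By hypothesis $|\psi|<1$ a.e., so the integrand $|\psi|^{2m}|f|^2\to 0$ pointwise a.e. For the dominating function one uses that $\varphi$ and $\eta$ are selfmaps of $\B_n$, so their radial limits satisfy $|\varphi|\leq 1$ and $|\eta|\leq 1$ a.e.\ on $\S_n$; Cauchy--Schwarz then gives $|\psi|=|\langle\varphi,\eta\rangle|\leq|\varphi|\,|\eta|\leq 1$ a.e. Hence $|\psi|^{2m}|f|^2\leq |f|^2$ a.e., and $|f|^2\in L^1(\S_n,d\sigma)$ since $f\in L^2$. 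Dominated convergence then yields $\int_{\S_n}|\psi|^{2m}|f|^2\,d\sigma\to 0$, which completes the argument.

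This proposition presents no serious obstacle: essentially all of the content is already packaged in the identity \eqref{Eqn:formulaPhi}, and what remains is elementary. Two small points warrant attention. First, one must verify the a.e.\ bound $|\psi|\leq 1$ in order to have an integrable dominating function, and this is precisely where the hypothesis that $\varphi$ and $\eta$ map $\B_n$ into itself enters. Second, the convergence supplied by dominated convergence is only strong, not uniform: indeed $\|g\psi^m\|_{\infty}$ need not tend to $0$ if $|\psi|$ approaches $1$ on sets of small measure. This is consistent with the conclusion being SAT rather than UAT, and it signals that upgrading to norm convergence would require genuinely different hypotheses.
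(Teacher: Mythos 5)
Your proof is correct and follows essentially the same route as the paper's: the identity \eqref{Eqn:formulaPhi} reduces everything to showing $T_{g\langle\varphi,\eta\rangle^m}\to 0$ strongly, which both you and the authors obtain from the dominated convergence theorem. You have merely written out explicitly the details (the bound via $\|P\|\leq 1$, the Cauchy--Schwarz estimate $|\langle\varphi,\eta\rangle|\leq 1$ a.e., and the integrable dominating function $|f|^2$) that the paper leaves implicit.
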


\begin{proof}
By assumption, $\langle\varphi,\eta\rangle^{m}\rightarrow 0$ a.e. on $\S_{n}$ as $m\to\infty$. This, together with Lebesgue Dominated Convergence Theorem, implies that $T_{g\langle\varphi,\eta\rangle^{m}}\rightarrow 0$, and hence, $C_{\eta}^{*}T_{g\langle\varphi,\eta\rangle^m}\ C_{\varphi}\rightarrow 0$ in the strong operator topology. Using \eqref{Eqn:formulaPhi}, we conclude that $\Phi^{m}(C^{*}_{\eta}T_{g}C_{\varphi})\rightarrow 0$ in the strong operator topology. The conclusion of the proposition follows.
\end{proof}

As suggested by \ref{Eqn:formulaPhi}, the following set is relevant to the study of the asmytotic Toeplitzness of $C^{*}_{\eta}T_{g}C_{\varphi}$:
\begin{align*}
E(\varphi,\eta) & = \big\{\zeta\in\S_n: \langle\varphi(\zeta),\eta(\zeta)\rangle = 1\big\}\\
 & = \big\{\zeta\in\S_n: \varphi(\zeta)=\eta(\zeta) \text{ and } |\varphi(\zeta)|=1\big\}.
\end{align*}
To obtain the second equality we have used the fact that $|\varphi(\zeta)|\leq 1$ and $|\eta(\zeta)|\leq 1$ for $\zeta\in\S_n$. Note that $E(\varphi,\varphi)$ is the set of all $\zeta\in\S_n$ for which $|\varphi(\zeta)|=1$. On the other hand, by \cite[Theorem 5.5.9]{RudinSpringer1980}, if $\varphi\neq\eta$, then $E(\varphi,\eta)$ has measure zero. 

\begin{proposition}\label{P:MSAT} For any analytic selfmaps $\varphi,\eta$ of $\B_n$ and any bounded function $g$ on $\S_n$, we have
\begin{align*}
\frac{1}{m}\sum_{j=1}^{m}\Phi^{j}(C_{\eta}^{*}T_{g}C_{\varphi}) \longrightarrow C^{*}_{\eta}T_{g\chi_{E(\varphi,\eta)}}C_{\varphi} \text{ in the strong operator topology}
\end{align*}
as $m\to\infty$.
\end{proposition}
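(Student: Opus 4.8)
The plan is to reduce the assertion to a pointwise statement about Cesàro averages of powers of the function $w=\langle\varphi,\eta\rangle$, and then to upgrade almost everywhere convergence of symbols to strong convergence of the associated operators. By \eqref{Eqn:formulaPhi} together with the linearity of the symbol map $f\mapsto T_f$ and of the map $A\mapsto C_\eta^* A C_\varphi$, the Cesàro average factors as
\begin{equation*}
\frac{1}{m}\sum_{j=1}^{m}\Phi^{j}(C_{\eta}^{*}T_{g}C_{\varphi}) = C_{\eta}^{*}\,T_{g\,s_m}\,C_{\varphi},\qquad s_m:=\frac{1}{m}\sum_{j=1}^{m}w^{j}.
\end{equation*}
Thus it suffices to understand the single symbol sequence $g\,s_m$.

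Next I would carry out the pointwise analysis of $s_m$. Since $|\varphi(\zeta)|\le 1$ and $|\eta(\zeta)|\le 1$ for a.e.\ $\zeta\in\S_n$, the Cauchy--Schwarz inequality gives $|w(\zeta)|\le 1$ a.e. On $E(\varphi,\eta)$ we have $w=1$, so $s_m\equiv 1$ there. Off $E(\varphi,\eta)$ we have, for a.e.\ point, $w\ne 1$ with $|w|\le 1$, and summing the geometric series yields the bound
\begin{equation*}
|s_m(\zeta)| = \frac{1}{m}\left|\frac{w(\zeta)\bigl(1-w(\zeta)^{m}\bigr)}{1-w(\zeta)}\right| \le \frac{2}{m\,|1-w(\zeta)|}\xrightarrow[m\to\infty]{}0.
\end{equation*}
Hence $s_m\to\chi_{E(\varphi,\eta)}$ a.e., and moreover $|s_m|\le 1$ everywhere. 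Consequently $g\,s_m\to g\,\chi_{E(\varphi,\eta)}$ a.e., with the uniform bound $|g\,s_m|\le\|g\|_\infty$.

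Finally I would transfer this to operators. The key step is the standard fact that if $\{f_m\}$ is a uniformly bounded sequence in $L^\infty$ with $f_m\to f$ a.e., then $T_{f_m}\to T_f$ in the strong operator topology: for fixed $h\in H^2$ one has $\|T_{f_m}h-T_fh\|\le\|(f_m-f)h\|_{L^2}$, and since $(f_m-f)h\to 0$ a.e.\ with $|(f_m-f)h|^2$ dominated by the integrable function $(2M)^2|h|^2$ (where $M$ is a common bound for the $\|f_m\|_\infty$ and $\|f\|_\infty$), the Lebesgue Dominated Convergence Theorem gives $\|(f_m-f)h\|_{L^2}\to 0$. Applying this with $f_m=g\,s_m$ and $f=g\,\chi_{E(\varphi,\eta)}$ shows $T_{g\,s_m}\to T_{g\,\chi_{E(\varphi,\eta)}}$ strongly. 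Since strong operator convergence is preserved under left and right multiplication by the fixed bounded operators $C_\eta^*$ and $C_\varphi$, we conclude $C_\eta^*T_{g\,s_m}C_\varphi\to C_\eta^*T_{g\,\chi_{E(\varphi,\eta)}}C_\varphi$ strongly, which is the claim. The only place where genuine care is needed is this last transfer: the uniform bound $|s_m|\le 1$ from the second step is exactly what licenses the dominated convergence argument, so that one obtains strong---and not merely weak---convergence of the Toeplitz operators.
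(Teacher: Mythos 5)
Your proposal is correct and follows essentially the same route as the paper: reduce via \eqref{Eqn:formulaPhi} and linearity to the a.e.\ convergence of the Ces\`aro averages $g\,s_m \to g\chi_{E(\varphi,\eta)}$ (proved by the same geometric-series identity), then pass to strong operator convergence. The only difference is that you spell out the dominated-convergence transfer from uniformly bounded a.e.\ convergent symbols to strong convergence of Toeplitz operators, which the paper leaves implicit (it is the same mechanism used in its proof of Proposition \ref{P:SAT_zero_ae}).
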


\begin{proof}
By \eqref{Eqn:formulaPhi}, it suffices to show that $(1/m)\sum_{j=1}^m g\langle\varphi,\eta\rangle^j$ converges to $g\chi_{E(\varphi,\eta)}$ a.e. on $\S_n$. But this follows from the identity
\begin{align*}
\frac{1}{m}\sum_{j=1}^m g(\zeta)\langle\varphi(\zeta),\eta(\zeta)\rangle^j & = \begin{cases}
g(\zeta) & \quad\text{ if } \zeta\in E(\varphi,\eta)\\
\frac{1}{m}g(\zeta)\Big(\frac{1-\langle\varphi(\zeta),\eta(\zeta)\rangle^{m+1}}{1-\langle\varphi(\zeta),\eta(\zeta)\rangle}\Big) & \quad\text{ if } \zeta\notin E(\varphi,\eta),
\end{cases}
\end{align*}
for any $\zeta\in\S_n$.
\end{proof}

Proposition \ref{P:MSAT} says that any operator of the form $C^{*}_{\eta}T_gC_{\varphi}$ is \textit{mean strongly asymptotically Toeplitz} (``MSAT'') with limit $C^{*}_{\eta}T_{g\chi_{E(\varphi,\eta)}}C_{\varphi}$. We now specify $\eta$ to be the identity map of $\B_n$ and $g$ to be the constant function $1$ and obtain

\begin{corollary}\label{C:MSAT} Let $\varphi$ be a non-identity analytic selfmap of $\B_n$ such that $C_{\varphi}$ is bounded on $H^2$. Then $C_{\varphi}$ is MSAT with asymptotic symbol zero. 
\end{corollary}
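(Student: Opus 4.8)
The plan is to specialize Proposition~\ref{P:MSAT} to the case $\eta = \mathrm{id}$ and $g \equiv 1$, and then identify the resulting limit operator as zero. When $\eta$ is the identity map of $\B_n$, the operator $C_\eta^*$ becomes $C_{\mathrm{id}}^* = I^* = I$, so the general expression $C_\eta^* T_g C_\varphi$ collapses to $T_g C_\varphi = T_1 C_\varphi = C_\varphi$. Thus Proposition~\ref{P:MSAT} already tells us that
\begin{equation*}
\frac{1}{m}\sum_{j=1}^{m}\Phi^{j}(C_{\varphi}) \longrightarrow T_{\chi_{E(\varphi,\mathrm{id})}}\,C_{\varphi}
\end{equation*}
in the strong operator topology. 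To conclude that $C_\varphi$ is MSAT with asymptotic symbol zero, it remains only to show that this limit operator is the zero operator, equivalently that $T_{\chi_{E(\varphi,\mathrm{id})}} C_\varphi = 0$.

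The key step is therefore to analyze the set $E(\varphi, \mathrm{id})$. By the second description of $E(\varphi,\eta)$ given in the excerpt, we have
\begin{equation*}
E(\varphi,\mathrm{id}) = \big\{\zeta\in\S_n : \varphi(\zeta) = \zeta \text{ and } |\varphi(\zeta)| = 1\big\}.
\end{equation*}
Since $\varphi$ is assumed to be a \emph{non-identity} selfmap, we have $\varphi \neq \mathrm{id}$ as maps, and so I would invoke the remark made just before Proposition~\ref{P:MSAT}: by \cite[Theorem 5.5.9]{RudinSpringer1980}, whenever $\varphi \neq \eta$ the set $E(\varphi,\eta)$ has measure zero. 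Applying this with $\eta = \mathrm{id}$, the set $E(\varphi,\mathrm{id})$ has $\sigma$-measure zero, so $\chi_{E(\varphi,\mathrm{id})} = 0$ almost everywhere on $\S_n$. Consequently $T_{\chi_{E(\varphi,\mathrm{id})}} = T_0 = 0$ as a bounded operator on $H^2$, and hence the limit operator $T_{\chi_{E(\varphi,\mathrm{id})}} C_\varphi$ vanishes.

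Combining these two observations gives that $(1/m)\sum_{j=1}^m \Phi^j(C_\varphi)$ converges to $0$ in the strong operator topology, which is precisely the statement that $C_\varphi$ is MSAT with asymptotic symbol zero. The only point requiring genuine input beyond Proposition~\ref{P:MSAT} is the measure-zero claim for $E(\varphi,\mathrm{id})$; this is the crux, and it is supplied directly by the cited uniqueness-type theorem of Rudin together with the hypothesis that $\varphi$ is not the identity. I do not anticipate any serious obstacle: the argument is a clean specialization, and the role of the non-identity hypothesis is exactly to force $\varphi \neq \mathrm{id}$ so that Rudin's theorem applies and the boundary coincidence set is negligible.
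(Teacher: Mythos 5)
Your proof is correct and is essentially the paper's own argument: the corollary is obtained by specializing Proposition~\ref{P:MSAT} to $\eta=\mathrm{id}$ and $g\equiv 1$, and then invoking the remark preceding that proposition (citing Rudin's Theorem 5.5.9) that $E(\varphi,\mathrm{id})$ has $\sigma$-measure zero whenever $\varphi$ is not the identity, so the limit operator $T_{\chi_{E(\varphi,\mathrm{id})}}C_{\varphi}$ vanishes. There are no gaps; your write-up simply makes explicit the steps the paper leaves implicit.
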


This result in the one-dimensional case was obtained by Shapiro in \cite{ShapiroJMAA2007}. In fact, Shapiro considered a more general notion of MSAT. It seems possible to generalize Proposition \ref{P:MSAT} in that direction and we leave this for the interested reader.

Theorem \ref{T:ZSCN} asserts that for $\varphi$ a non-identity automorphism of the unit disk $\D$, the operators $C_{\varphi}$ and $C_{\varphi}^{*}$ are not SAT. In dimensions greater than one, the situation is different. To state our result, we first fix some notation. Let $\mathcal{A}(\B_n)$ denote the space of functions that are analytic on the open unit ball $\B_n$ and continuous on the closure $\overline{\B}_n$. We also let ${\rm Lip}(\alpha)$ (for $0<\alpha\leq 1$) be the space of $\alpha$-Lipschitz continuous functions on $\B_n$, that is, the space of all functions $f:\B_n\rightarrow\C$ such that
\begin{align*}
\sup\Big\{\dfrac{|f(a)-f(b)|}{|a-b|^{\alpha}}: a,b\in\B_n, a\neq b\Big\} < \infty.
\end{align*}
We shall need the following result, see \cite[p.248]{RudinSpringer1980}.
\begin{proposition}\label{P:RuSi}
Suppose $n\geq 2$. If $1/2<\alpha\leq 1$ and $f\in\mathcal{A}(\B_n)\cap {\rm Lip}(\alpha)$ is not a constant function, then
\begin{align*}
\sigma\Big(\big\{\zeta\in\S_n: |f(\zeta)|=\|f\|_{\infty}\big\}\Big) = 0.
\end{align*}
\end{proposition}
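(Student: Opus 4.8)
The plan is to argue by contradiction, exploiting the rigidity of holomorphic functions in the \emph{complex tangential} directions of $\S_n$---directions that exist only because $n\geq 2$. (For $n=1$ there are no such directions, and indeed the statement fails there: a Blaschke product has modulus $1$ on all of $\S_1=\partial\D$.) I would normalize so that $\|f\|_\infty=1$, set $E=\{\zeta\in\S_n:|f(\zeta)|=1\}$, and suppose toward a contradiction that $\sigma(E)>0$. For $1\leq j<k\leq n$ introduce the complex tangential operators
\[
L_{jk}=\bar z_k\,\partial_{z_j}-\bar z_j\,\partial_{z_k},
\]
which are tangent to $\S_n$ and differentiate in holomorphic directions.

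First I would record the basic algebra. Since $\bar f$ is antiholomorphic, $\partial_{z_j}\bar f=0$, so $L_{jk}\bar f=0$; hence on $\S_n$ one has $L_{jk}(|f|^2)=\bar f\,(L_{jk}f)$. The decisive use of the hypothesis $\alpha>1/2$ enters next: for a holomorphic function in ${\rm Lip}(\alpha)$ the complex tangential derivatives gain a full unit of smoothness (they land in ${\rm Lip}(2\alpha-1)$), and since $2\alpha-1>0$ each $L_{jk}f$ extends continuously to $\overline{\B}_n$. In particular $|f|^2$ has a continuous complex tangential gradient on $\S_n$. Now at every point $\zeta\in E$ the function $|f|^2$ attains its maximum value $1$ over $\S_n$, so its tangential gradient must vanish there; combined with the identity above and $|f(\zeta)|=1\neq 0$ this yields $L_{jk}f=0$ at every point of $E$, for all $j,k$. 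Equivalently, multiplying the relations $\bar z_k\,\partial_{z_j}f=\bar z_j\,\partial_{z_k}f$ by $z_k$ and summing over $k$ gives $\partial_{z_j}f=\bar z_j\,Rf$ on $E$, where $R=\sum_\ell z_\ell\,\partial_{z_\ell}$ is the radial derivative.

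Two steps remain, and the first is the main obstacle: to \emph{propagate} the vanishing $L_{jk}f=0$ from the positive-measure set $E$ to all of $\S_n$. Continuity of $L_{jk}f$ is not enough, since $E$ may be nowhere dense (a positive-measure Cantor-type set), and a priori $L_{jk}f$ is a general continuous function; what must be exploited is its special structure as a combination $\bar z_k\,\partial_{z_j}f-\bar z_j\,\partial_{z_k}f$ of conjugate-holomorphic multiples of the holomorphic functions $\partial_{z_\ell}f$. I would try to deduce $L_{jk}f\equiv 0$ from a boundary-uniqueness principle for this class, or alternatively to reduce to the uniqueness theorem for $H^2$ (boundary values of a nonzero Hardy-space function cannot vanish on a set of positive measure) by manufacturing from the relation $\partial_{z_j}f=\bar z_j\,Rf$ a genuinely holomorphic function that vanishes on $E$. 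This measure-theoretic propagation, rather than any formal computation, is where the real work lies.

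The second, and routine, step is to conclude once $L_{jk}f\equiv 0$ for all $j,k$ on $\S_n$, that is, $\partial_{z_j}f=\bar z_j\,Rf$ identically. Writing $f=\sum_\gamma a_\gamma z^\gamma$ and $h=Rf$, I would pair this identity against the monomials $z^\alpha$ in $L^2(\S_n)$, using $\|z^\gamma\|^2=(n-1)!\,\gamma!/(n-1+|\gamma|)!$. The resulting relation forces $a_{\alpha+e_j}\,(\alpha_j+1)\,\tfrac{n-1}{n+|\alpha|}=0$; the factor $n-1$ is strictly positive precisely because $n\geq 2$, so every coefficient $a_\gamma$ with $|\gamma|\geq 1$ vanishes and $f$ is constant, contradicting our assumption. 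Hence $\sigma(E)=0$.
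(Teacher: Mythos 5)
The paper itself offers no proof of this proposition: it is quoted from Rudin's book (\cite[p.~248]{RudinSpringer1980}), so your attempt has to stand on its own, and it does not. Your frame contains the right ingredients --- the complex-tangential operators $L_{jk}$, the vanishing of $L_{jk}f$ at maximum-modulus points, and the concluding Taylor-coefficient computation, which is correct, including the factor $(n-1)/(n+|\alpha|)$ --- but the proposal has a genuine gap, which you yourself flag: the propagation of $L_{jk}f=0$ from the positive-measure set $E$ to all of $\S_n$. This is not a technical loose end; it is the entire content of the theorem. The functions $L_{jk}f=\bar z_k\partial_jf-\bar z_j\partial_kf$ are neither holomorphic nor conjugate-holomorphic, so no boundary uniqueness theorem of Hardy-space type (such as \cite[Theorem 5.5.9]{RudinSpringer1980}) applies to them; and since $E$ may be nowhere dense, continuity cannot spread the vanishing. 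Your two suggested escapes (``a boundary-uniqueness principle for this class,'' or manufacturing a holomorphic function vanishing on $E$ out of $\partial_jf=\bar z_j\,Rf$) are precisely what is missing, and neither is supplied: every algebraic manipulation of that relation leaves factors $\bar z_j$ or $|z_j|^2$ in place, so no element of $H^2$ vanishing on $E$ falls out of it. What your argument actually proves is only the much weaker implication: \emph{if} $L_{jk}f\equiv 0$ on all of $\S_n$, then $f$ is constant. All of the difficulty remains.

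A secondary inaccuracy, worth recording because it blocks one natural attempt to close the gap: the tangential smoothness gain is one half, not a full unit. For holomorphic $f\in{\rm Lip}(\alpha)$ the standard complex-tangential estimates give $L_{jk}f\in{\rm Lip}(\alpha-1/2)$, and this is sharp, so $L_{jk}f$ need not lie in ${\rm Lip}(2\alpha-1)$. Take $f(z)=(1-z_1)^{\alpha}$, so that $L_{12}f=-\alpha\bar z_2(1-z_1)^{\alpha-1}$, and compare the points $\zeta_t=(\cos t,\sin t,0,\dots,0)\in\S_n$ and $r\zeta_t$ with $1-r=\tfrac12(1-\cos t)$: the difference of the two values is of order $t^{2\alpha-1}$ while the distance between the points is of order $t^{2}$, so the best H\"older exponent in the radial direction is $(2\alpha-1)/2=\alpha-1/2$. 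This slip is harmless for the steps you carried out, since all you use is continuity of $L_{jk}f$ on $\overline{\B}_n$, which the correct exponent still provides when $\alpha>1/2$; but it means, in particular, that one cannot hope to finish by bootstrapping a ``full unit'' gain, and any repair of the propagation step has to come from somewhere else --- in the known proof, from quantitative estimates at the maximum-modulus points themselves rather than from an a.e.\ first-order identity.
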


\begin{theorem}\label{T:SAT_affine} Suppose $n\geq 2$. Let $A:\C^n\rightarrow\C^n$ be a linear operator and $b$ be a vector in $\C^n$. Let $f$ be in $\mathcal{A}(\B_n)\cap {\rm Lip}(\alpha)$ for some $1/2<\alpha\leq 1$. Suppose $\varphi(z)=f(z)(Az+b)$ is a selfmap of $\B_n$ and $\varphi$ is not of the form $\varphi(z)=\lambda z$ with $|\lambda|=1$. Then both $C_{\varphi}$ and $C_{\varphi}^{*}$ are SAT with asymptotic symbol zero.
\end{theorem}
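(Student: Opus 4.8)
The plan is to reduce both assertions, via Proposition \ref{P:SAT_zero_ae}, to a single pointwise fact about the boundary function $\langle\varphi(\zeta),\zeta\rangle$. Taking $\eta$ to be the identity map and $g\equiv 1$ in \eqref{Eqn:formulaPhi} gives $\Phi^m(C_\varphi)=T_{\langle\varphi,\mathrm{id}\rangle^m}C_\varphi$, while placing the identity map in the last slot and $\eta=\varphi$ gives $\Phi^m(C_\varphi^*)=C_\varphi^*T_{\langle\mathrm{id},\varphi\rangle^m}$. Since $\langle\mathrm{id},\varphi\rangle(\zeta)=\overline{\langle\varphi(\zeta),\zeta\rangle}$ has the same modulus as $\langle\varphi,\mathrm{id}\rangle(\zeta)=\langle\varphi(\zeta),\zeta\rangle$, Proposition \ref{P:SAT_zero_ae} shows that \emph{both} $C_\varphi$ and $C_\varphi^*$ are SAT with asymptotic symbol zero as soon as the set
\[
G=\big\{\zeta\in\S_n:\ |\langle\varphi(\zeta),\zeta\rangle|=1\big\}
\]
is $\sigma$-null. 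This is the only thing I must prove.

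I would next analyze $G$ through the equality case of Cauchy--Schwarz. As $\varphi$ is a selfmap with components in $\mathcal{A}(\B_n)$, we have $|\langle\varphi(\zeta),\zeta\rangle|\le|\varphi(\zeta)|\,|\zeta|\le 1$ on $\S_n$, and equality forces $|\varphi(\zeta)|=1$ together with $\varphi(\zeta)$ proportional to $\zeta$; that is, $\varphi(\zeta)=c\zeta$ with $|c|=1$, whence $f(\zeta)\ne 0$ and $A\zeta+b=\mu\zeta$ with $\mu=c/f(\zeta)$ for every $\zeta\in G$. Introducing the holomorphic polynomials
\[
Q_{jl}(z)=(Az+b)_j\,z_l-(Az+b)_l\,z_j,
\]
the proportionality $A\zeta+b=\mu\zeta$ makes $Q_{jl}(\zeta)=\mu\zeta_j\zeta_l-\mu\zeta_l\zeta_j=0$, so $G$ lies in the common boundary zero set of the $Q_{jl}$.

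This leads to a dichotomy. If some $Q_{jl}$ is not identically zero on $\C^n$, then (a holomorphic polynomial vanishing on all of $\S_n$ would vanish on $\B_n$ by the maximum principle, hence be $\equiv 0$) that $Q_{jl}$ is not identically zero on $\S_n$; since the boundary zero set of a nonzero function in $\mathcal{A}(\B_n)$ is null (a standard consequence of $\log|Q_{jl}|\in L^1(\sigma)$), it follows that $\sigma(G)=0$. The remaining case, which I expect to be the crux, is when \emph{all} $Q_{jl}\equiv 0$, i.e. $Az+b$ is identically proportional to $z$. Testing on $z=te_p$ (as a polynomial in $t$, with $e_1,\dots,e_n$ the standard basis) forces $b_j=0$ and $(Ae_p)_j=0$ for all $j\ne p$; using $n\ge 2$ this yields $b=0$ and $A$ diagonal, and testing on $z=e_p+e_q$ then forces all diagonal entries equal, so $A=\lambda I$. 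Hence $\varphi(z)=\lambda f(z)z$ and $\langle\varphi(\zeta),\zeta\rangle=\lambda f(\zeta)$ on $\S_n$, so $G=\{\zeta:|\lambda f(\zeta)|=1\}$. Because $|\lambda f|\le 1$ on $\S_n$, either $\|\lambda f\|_\infty<1$ and $G=\emptyset$, or $\lambda f$ attains its maximum modulus precisely on $G$; in the latter situation Proposition \ref{P:RuSi} (here I finally invoke $1/2<\alpha\le 1$ and $n\ge 2$) gives $\sigma(G)=0$ whenever $\lambda f$ is nonconstant. If instead $\lambda f$ is constant, then $\varphi(z)=\gamma z$: the selfmap condition gives $|\gamma|\le 1$, the value $|\gamma|=1$ is exactly the case excluded by hypothesis, and $|\gamma|<1$ gives $G=\emptyset$. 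In every case $\sigma(G)=0$.

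The main obstacle is precisely the degenerate case in which the analytic obstructions $Q_{jl}$ all vanish: there the non-holomorphic function $\langle\varphi,\mathrm{id}\rangle$ carries no exploitable holomorphic information by itself, and one must first extract the rigid structure $\varphi(z)=\lambda f(z)z$ by elementary linear algebra (this is where $n\ge 2$ is essential for forcing $b=0$) before Rudin's maximum-modulus estimate, Proposition \ref{P:RuSi}, can be applied to the \emph{scalar} function $\lambda f$.
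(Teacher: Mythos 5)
Your proof is correct, and it shares the paper's overall skeleton: both arguments reduce the theorem, via Proposition \ref{P:SAT_zero_ae} (applied once with $\eta$ the identity to handle $C_{\varphi}$, and once with the identity in the other slot to handle $C_{\varphi}^{*}$), to showing that the set $\mathcal{E}=\{\zeta\in\S_n:|\langle\varphi(\zeta),\zeta\rangle|=1\}$ is $\sigma$-null, and both isolate the degenerate case $\varphi(z)=\lambda f(z)z$, where Proposition \ref{P:RuSi} is applied to the scalar function exactly as you do. Where you genuinely diverge is the treatment of the non-degenerate case. The paper argues geometrically: a point of $\mathcal{E}$ satisfies $\big(A-\gamma(\zeta)/f(\zeta)\big)\zeta+b=0$, so $\mathcal{E}$ lies in $\bigcup_{\lambda\in\C}(A-\lambda)^{-1}(\{-b\})\cap\S_n$, which is split into a rational curve (parametrized by $\lambda\notin{\rm sp}(A)$ via Cramer's rule) and finitely many affine subspaces of real dimension at most $2n-2$, each meeting $\S_n$ in a null set by dimension counting. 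You instead encode the proportionality $A\zeta+b\parallel\zeta$ in the holomorphic polynomials $Q_{jl}(z)=(Az+b)_jz_l-(Az+b)_lz_j$: if some $Q_{jl}\not\equiv 0$, then $\mathcal{E}$ sits inside the boundary zero set of a nonzero polynomial, which is $\sigma$-null --- the same boundary-uniqueness fact (Rudin, Theorem 5.5.9) the paper already invokes to show $E(\varphi,\eta)$ is null when $\varphi\neq\eta$; and your check on $z=te_p$ and $z=e_p+e_q$ (where $n\geq 2$ is used to force $b=0$) shows that the case ``all $Q_{jl}\equiv 0$'' is exactly the paper's Case 1, $A=\lambda I$, $b=0$. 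Your route buys a cleaner justification of the main case: it replaces the paper's somewhat informal measure/dimension assertions about rational curves and affine slices of the sphere with a single standard theorem already in the paper's toolkit, and it makes transparent that the degenerate case is precisely the one where no holomorphic obstruction survives, so that Proposition \ref{P:RuSi} must take over. What the paper's argument buys in exchange is an explicit geometric description of the exceptional set as a low-dimensional object, independent of any zero-set theorem for holomorphic functions.
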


{
Before giving a proof of the theorem, we present here an immediate application. For any $n\geq 1$, a linear fractional mapping of the unit ball $\B_{n}$ has the form $$\varphi(z)=\frac{Az+B}{\langle z,C\rangle+D},$$ where $A$ is a linear map, $B, C$ are vectors in $\C^n$ and $D$ is a non-zero complex number. It was shown by Cowen and MacCluer that $C_{\varphi}$ is always bounded on $H^2$ for any linear fractional selfmap $\varphi$ of $\B_n$. We recall that when $n=1$ these operators and their adjoints are not SAT in general by Theorem \ref{T:ZSCN}. In higher dimensions it follows from Theorem \ref{T:SAT_affine} that the opposite is true.

\begin{corollary}
\label{C:SAT_LFT}
For $n\geq 2$, both $C_{\varphi}$ and $C_{\varphi}^{*}$ are SAT with asymptotic symbol zero except in the case $\varphi(z)=\lambda z$ for some $\lambda\in\mathbb{T}$. 
\end{corollary}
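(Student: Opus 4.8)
The plan is to show that every linear fractional selfmap of $\B_n$ already falls within the scope of Theorem \ref{T:SAT_affine}, so that the corollary becomes a direct specialization. Given $\varphi(z)=\frac{Az+B}{\langle z,C\rangle+D}$, I would set $f(z)=\frac{1}{\langle z,C\rangle+D}$ and observe that $\varphi(z)=f(z)(Az+B)$. This is precisely the form appearing in Theorem \ref{T:SAT_affine}, with the same linear operator $A$ and the vector $b=B$. Moreover the exceptional case excluded in Theorem \ref{T:SAT_affine}, namely $\varphi(z)=\lambda z$ with $|\lambda|=1$, coincides exactly with the excluded case $\varphi(z)=\lambda z$, $\lambda\in\mathbb{T}$, of the corollary. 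Hence, outside this single case, Theorem \ref{T:SAT_affine} will deliver the conclusion immediately, provided its hypotheses on $f$ are met.

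The work therefore reduces to verifying that $f\in\mathcal{A}(\B_n)\cap{\rm Lip}(\alpha)$ for some $1/2<\alpha\le 1$. The crucial point is that the denominator $\langle z,C\rangle+D$, which is an affine (hence holomorphic) function of $z$, does not vanish anywhere on the closed ball $\overline{\B}_n$. This is a standard structural feature of linear fractional selfmaps of $\B_n$ in the sense of Cowen and MacCluer: a zero of the denominator inside $\B_n$ would produce a pole, contradicting $\varphi(\B_n)\subseteq\B_n$, while a zero on $\S_n$ would force $\varphi$ to blow up along a boundary approach, contradicting $|\varphi|\le 1$ on $\overline{\B}_n$. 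Granting the non-vanishing, $f$ is the reciprocal of a nowhere-zero holomorphic function on a neighborhood of $\overline{\B}_n$, so $f$ is holomorphic on $\B_n$ and continuous on $\overline{\B}_n$, giving $f\in\mathcal{A}(\B_n)$.

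For the Lipschitz estimate I would use that $\langle z,C\rangle+D$ is bounded away from $0$ on the compact set $\overline{\B}_n$; consequently the gradient of $f$ is bounded there, which yields $f\in{\rm Lip}(1)$. Since ${\rm Lip}(1)\subseteq{\rm Lip}(\alpha)$ for every $\alpha\le 1$, the choice $\alpha=1$ satisfies $1/2<\alpha\le 1$ and verifies the remaining hypothesis. With the decomposition $\varphi(z)=f(z)(Az+B)$ and all assumptions of Theorem \ref{T:SAT_affine} in place, the conclusion that both $C_\varphi$ and $C_\varphi^*$ are SAT with asymptotic symbol zero follows at once in every case except $\varphi(z)=\lambda z$ with $\lambda\in\mathbb{T}$.

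The main, and essentially the only, nontrivial step I anticipate is the non-vanishing of the denominator on $\overline{\B}_n$; once this is in hand, everything else is routine bookkeeping about the scalar-times-affine decomposition together with the elementary smoothness of the reciprocal of a nonvanishing affine function. I would secure the non-vanishing either by invoking the Cowen--MacCluer classification of linear fractional selfmaps of the ball or by the short direct argument sketched above.
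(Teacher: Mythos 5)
Your proposal is correct and takes essentially the same route as the paper: the paper presents this corollary as an immediate specialization of Theorem \ref{T:SAT_affine}, using precisely your decomposition $\varphi(z)=f(z)(Az+B)$ with $f(z)=1/(\langle z,C\rangle+D)$, where $f\in\mathcal{A}(\B_n)\cap{\rm Lip}(1)$ because the denominator of a linear fractional selfmap of $\B_n$ is non-vanishing on $\overline{\B}_n$ (a standard fact from the Cowen--MacCluer theory, which you rightly identify as the one nontrivial ingredient). The only caution is that your ``short direct argument'' for non-vanishing overlooks the degenerate possibility that numerator and denominator vanish simultaneously; this forces $\varphi$ to be constant, a case still covered by Theorem \ref{T:SAT_affine} with $f\equiv 1$, $A=0$, $b=\varphi$, so citing Cowen--MacCluer (or adding this remark) closes the point.
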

}

\begin{proof}[Proof of Theorem \ref{T:SAT_affine}] We claim that under the hypothesis of the theorem, the set $$\mcE=\big\{\zeta\in\S_n: |\langle\varphi(\zeta),\zeta\rangle|=1\big\}$$
is a $\sigma-$null subset of $\S_n$. We may then apply Proposition \ref{P:SAT_zero_ae}. 

There are two cases to consider.

\textbf{Case 1.} $A=\delta I$ for some complex number $\delta$ and $b=0$. To simplify the notation, we write $\varphi(z)=g(z)z$, where $g(z)=\delta f(z)$. Then the set $\mcE$ can be written as $\mcE = \{\zeta\in\S_n: |g(\zeta)|=1\}$.

Since $\varphi$ is a selfmap of $\B_n$, we have $\|g\|_{\infty}\leq 1$. Now if $\|g\|_{\infty}<1$, then $\mcE=\emptyset$ so $\sigma(\mcE)=0$. If $\|g\|_{\infty}=1$, then $g$ is a non-constant function since $\varphi$ is not of the form $\varphi(z)=\lambda z$ for some $|\lambda|=1$. Proposition \ref{P:RuSi} then gives $\sigma(\mcE)=0$ as well.

\textbf{Case 2.} $A$ is not a multiple of the identity or $b\neq 0$. Since $|\varphi(\zeta)|\leq 1$ for $\zeta\in\S_n$, we see that $\zeta$ belongs to $\mcE$ if and only if there is a unimodular complex number $\gamma(\zeta)$ such that $\varphi(\zeta)=\gamma(\zeta)\zeta$. This implies that $f(\zeta)\neq 0$ and 
\begin{align}\label{Eqn:setE}
\big(A-\gamma(\zeta)/f(\zeta)\big)\zeta+b=0.
\end{align}
Equation \eqref{Eqn:setE} shows that $\mcE$ is contained in the intersection of $\S_n$ with the set
\begin{align*}
\mcM=\Big\{z\in\C^n: (A-\lambda)z+b=0\,\text{ for some } \lambda\in\C\Big\} = \bigcup_{\lambda\in\mathbb{C}}(A-\lambda)^{-1}(\{-b\}).
\end{align*}
Now decompose $\mcM$ as the union $\mcM=\mcM_1 \cup \mcM_2$, where
\begin{align*}
\mcM_1 = \bigcup_{\lambda\in\C\backslash{\rm sp}(A)}(A-\lambda)^{-1}(\{-b\}) \quad\text{ and } \quad \mcM_2 =  \bigcup_{\lambda\in{\rm sp}(A)}(A-\lambda)^{-1}(\{-b\}).
\end{align*}
We have used ${\rm sp}(A)$ to denote the spectrum of $A$, which is just the set of eigenvalues since $A$ is an operator on $\C^n$. We shall show that both sets $\mcM_1\cap\S_n$ and $\mcM_2\cap\S_n$ are $\sigma$-null sets.

For $\lambda\in\C\backslash{\rm sp}(A)$, the equation $(A-\lambda)z+b=0$ has a unique solution whose components are rational functions in $\lambda$ by Cramer's rule. So $\mcM_1$ is a rational curve parametrized by $\lambda\in\C\backslash{\rm sp}(A)$. Since the real dimension of $\S_n$ is $2n-1$, which is at least $3$ when $n\geq 2$, we conclude that $\sigma(\mcM_1\cap\S_n)=0$. 

For $\lambda\in{\rm sp}(A)$, the set $(A-\lambda)^{-1}(\{-b\})$ is either empty or an affine subspace of complex dimension at most $n-1$ (hence, real dimension at most $2n-2$). Since $\mcM_2$ is a union of finitely many such sets and the sphere $\S_n$ has real dimension $2n-1$, we conclude that $\sigma(\mcM_2\cap\S_n)=0$.

Since $\mcE\subset (\mcM_1\cup\mcM_2)\cap\S_n$ and $\sigma(\mcM_1\cap\S_n)=\sigma(\mcM_2\cap\S_n)=0$, we have $\sigma(\mcE)=0$, which completes the proof of the claim.
\end{proof}

Nazarov and Shapiro \cite{NazarovCVEE2007} showed in the one-dimensional case that if $\varphi$ is an inner function which is not of the form $\lambda z$ for some constant $\lambda$, and $\varphi(0)=0$, then $C_{\varphi}$ is not SAT but $C_{\varphi}^{*}$ is SAT. While we do not know what the general situation is in higher dimensions, we have obtained a partial result. 

\begin{proposition}
Suppose $f$ is a non-constant inner function on $\B_{n}$ and $\varphi(z)=f(z)z$ for $z\in\B_{n}$ such that $C_{\varphi}$ is bounded on $H^2$. Then $C_{\varphi}$ is not SAT but $C_{\varphi}^{*}$ is SAT.
\end{proposition}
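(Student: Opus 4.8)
The plan is to reduce everything to the iteration formula \eqref{Eqn:formulaPhi} and then to analyze the resulting multiplications by powers of $f$. Since $f$ is inner, its radial boundary values satisfy $|f|=1$ a.e. on $\S_n$, while the maximum modulus principle forces $|f(w)|<1$ at every interior point $w\in\B_n$ because $f$ is non-constant. Writing $\mathrm{id}$ for the identity map of $\B_n$, a pointwise computation on the sphere gives $\langle\varphi(\zeta),\zeta\rangle=\langle f(\zeta)\zeta,\zeta\rangle=f(\zeta)|\zeta|^2=f(\zeta)$ and, symmetrically, $\langle\zeta,\varphi(\zeta)\rangle=\overline{f(\zeta)}$ for a.e. $\zeta\in\S_n$. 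Feeding these into \eqref{Eqn:formulaPhi} (with $\eta=\mathrm{id}$, $g=1$ for the first identity, and with $\eta=\varphi$, $g=1$, and the identity map in place of $\varphi$ for the second) yields
\[
\Phi^m(C_\varphi)=T_{f^m}C_\varphi=M_{f^m}C_\varphi,
\qquad
\Phi^m(C_\varphi^*)=C_\varphi^*\,T_{\overline{f}^{\,m}}=C_\varphi^*(M_f^*)^m,
\]
where I use that $f^m\in H^\infty$, so $T_{f^m}=M_{f^m}$ and $T_{\overline{f}^{\,m}}=M_{f^m}^*=(M_f^*)^m$.

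For the claim that $C_\varphi$ is not SAT, I would test strong convergence of $\{\Phi^m(C_\varphi)\}$ on the constant function $1$. Since $C_\varphi 1=1$, we get $\Phi^m(C_\varphi)1=M_{f^m}1=f^m$. Because $|f|=1$ a.e. on $\S_n$, we have $\|f^m\|_{H^2}=1$ for every $m$; on the other hand $f^m(w)=f(w)^m\to 0$ for each $w\in\B_n$, so by continuity of point evaluations any $H^2$-limit of $\{f^m\}$ would have to vanish identically, contradicting the constant norm $1$. Hence $\{f^m\}$ does not converge in $H^2$, and therefore $\{\Phi^m(C_\varphi)\}$ does not converge in the strong operator topology.

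For the claim that $C_\varphi^*$ is SAT with asymptotic symbol zero, it suffices, in view of the factorization $\Phi^m(C_\varphi^*)=C_\varphi^*(M_f^*)^m$ and the boundedness of $C_\varphi^*$, to prove that $(M_f^*)^m\to 0$ in the strong operator topology. First I note that $M_f$ is an isometry on $H^2$, since $\|M_f h\|^2=\int_{\S_n}|f|^2|h|^2\,d\sigma=\|h\|^2$ using $|f|=1$ a.e. Consequently $M_f^m$ is an isometry onto the closed subspace $f^m H^2$, so $M_f^m(M_f^*)^m=P_m$, the orthogonal projection onto $f^m H^2$, and hence $\|(M_f^*)^m h\|=\|P_m h\|$ for every $h\in H^2$. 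Since the subspaces $f^m H^2$ are nested and decreasing, the projections $P_m$ converge strongly to the projection onto $\bigcap_m f^m H^2$, so the whole matter reduces to showing that this intersection is trivial.

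The main obstacle is precisely this last point, $\bigcap_m f^m H^2=\{0\}$, and I would dispose of it using the interior estimate recorded above. If $h$ lies in the intersection, then $h/f^m\in H^2$ for every $m$ with $\|h/f^m\|_{H^2}=\|h\|_{H^2}$ (again because $M_f$ is an isometry). Evaluating at an interior point $w$ with $f(w)\neq 0$ and using the reproducing kernel bound $|(h/f^m)(w)|\le\|K_w\|\,\|h\|$, we obtain $|h(w)|\le\|K_w\|\,\|h\|\,|f(w)|^m$, which tends to $0$ as $m\to\infty$ because $|f(w)|<1$; at points where $f(w)=0$ we trivially have $h(w)=0$. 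Thus $h$ vanishes on all of $\B_n$, so $h=0$. This yields $P_m\to 0$ strongly, hence $(M_f^*)^m\to 0$ strongly and $\Phi^m(C_\varphi^*)\to 0$ strongly, completing the argument.
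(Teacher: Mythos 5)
Your proof is correct, and although it starts from the same reduction as the paper---formula \eqref{Eqn:formulaPhi} gives $\Phi^m(C_\varphi)=T_{f^m}C_\varphi$ and $\Phi^m(C_\varphi^*)=C_\varphi^*T_{\overline{f}^m}$, after which one tests the first sequence on the constant function $1$ and reduces the second claim to $T_{\overline{f}^m}\to 0$ strongly---you finish both halves by genuinely different arguments. For the negative half, the paper only observes that $\|\Phi^m(C_\varphi)1\|=\|f^m\|=1$, so the orbit does not tend to zero, and then invokes Corollary \ref{C:MSAT} to see that any SAT limit would have to be zero; you instead show outright that $\{f^m\}$ has no limit at all in $H^2$ (constant norm one versus pointwise convergence to zero on $\B_n$), which is self-contained and bypasses the MSAT machinery entirely. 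For the positive half, the paper diagonalizes on reproducing kernels, $T^{*}_{f^m}K_a=\overline{f^m(a)}K_a$ with $|f(a)|<1$ by the maximum principle, and concludes by density of the kernels together with the uniform bound $\|T^{*}_{f^m}\|\le 1$; you instead exploit that $M_f$ is an isometry, identify $\|(M_f^*)^m h\|$ with $\|P_m h\|$ where $P_m$ is the projection onto $f^mH^2$, and prove $\bigcap_m f^mH^2=\{0\}$ via an interior decay estimate. Both routes ultimately rest on the same two ingredients (reproducing kernels and the strict inequality $|f(w)|<1$ inside the ball), but the paper's kernel-eigenvector argument is shorter, while yours isolates the structural fact that the decreasing invariant subspaces $f^mH^2$ have trivial intersection---a Wold-type statement of independent interest.
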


\begin{proof}
By formula \eqref{Eqn:formulaPhi}, we have $\Phi^{m}(C_{\varphi})=T_{f^{m}}C_{\varphi}$ and $\Phi^{m}(C^{*}_{\varphi})=C_{\varphi}^{*}T^{*}_{{f}^{m}}$ for all positive integers $m$.

It then follows that $\|\Phi^m(C_{\varphi})(1)\| = \|T_{f^m}C_{\varphi}1\| = \|f^m\|=1$. Hence $\Phi^m(C_{\varphi})$ does not converge to zero in the strong operator topology. Since $\varphi$ is a non-identity selfmap of $\B_n$, Corollary \ref{C:MSAT} implies that $C_{\varphi}$ is not SAT.

On the other hand, we claim that as $m\to\infty$, $T^{*}_{{f}^m}$, and hence, $\Phi^m(C_{\varphi}^{*})$, converges to zero in the strong operator topology. This shows that $C_{\varphi}^{*}$ is SAT with asymptotic symbol zero. The proof of the claim is similar to that in case of dimension one (\cite[Theorem 4.2]{NazarovCVEE2007}). For the reader's convenience, we provide here the details. For any $a\in\B_n$, there is a function $K_{a}\in H^2$ such that $h(a)=\langle h,K_a\rangle$ for any $h\in H^2$. Such a function is called a reproducing kernel. It is well known that $T^{*}_{{f}^m}K_{a} = \overline{f^m(a)}K_{a}$ for any integer $m\geq 1$. Since $|f(a)|<\|f\|_{\infty}=1$ by the Maximum Principle, it follows that $\|T^{*}_{{f}^m}K_{a}\|\rightarrow 0$ as $m\to\infty$. Because the linear span of $\{K_a: a\in\B_n\}$ is dense in $H^2$ and the operator norms of $\|T^{*}_{{f}^m}\|$ are uniformly bounded by one, we conclude that $T^{*}_{{f}^m}\rightarrow 0$ in the strong operator topology.
\end{proof}

\section{Uniform asymptotic Toeplitzness}

It follows from the characterization of Toeplitz operators and the notion of Toeplitzness that any Toeplitz operator is UAT. The following lemma shows that any compact operator is also UAT. Hence, anything of the form ``Toeplitz + compact'' is UAT. This result may have appeared in the literature but for completeness, we sketch here a proof.

\begin{lemma} \label{L:limitCompact}
Let $K$ be a compact operator on $H^2$. Then we have $$\lim_{m\to\infty}\|\Phi^{m}(K)\| = 0.$$ As a consequence, for any bounded function $f$, the operator $T_{f}+K$ is uniformly asymptotically Toeplitz with asymptotic symbol $f$.
\end{lemma}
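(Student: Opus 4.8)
The plan is to reduce the whole statement to the single estimate $\|\Phi^m(K)\|\to 0$. The ``consequence'' is then immediate: since a Toeplitz operator is a fixed point of $\Phi$, linearity of $\Phi$ gives $\Phi^m(T_f+K)=T_f+\Phi^m(K)$, so $\|\Phi^m(T_f+K)-T_f\|=\|\Phi^m(K)\|\to 0$, and $T_f+K$ is UAT with limit $T_f$, i.e. with asymptotic symbol $f$.

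To control $\|\Phi^m(K)\|$ I would start from the matricial form $\Phi(A)=S^{*}\,\mathrm{diag}(A)\,S$ recorded in the introduction, where $\mathrm{diag}(A)$ is block-diagonal with $n$ copies of $A$ and $S$ is the column operator with $S^{*}S=I$. Iterating this identity, and using $S_m^{*}S_m=I$ to keep all the relevant norms equal to one, I would prove by induction that $\Phi^m(A)=S_m^{*}\,\mathrm{diag}_{n^m}(A)\,S_m$, where $S_m\colon H^2\to(H^2)^{n^m}$ has components $h\mapsto z_{j_1}\cdots z_{j_m}h$ indexed by $(j_1,\dots,j_m)\in\{1,\dots,n\}^m$ and $\mathrm{diag}_{n^m}(A)$ is block-diagonal with $n^m$ copies of $A$. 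Setting up this iterated identity and identifying the components of $S_m$ is the step that demands the most care, though it is purely bookkeeping.

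The decisive observation is a degree shift. Decomposing $H^2=\bigoplus_{d\ge 0}\mathcal{H}_d$ into spaces of homogeneous polynomials and letting $Q_m$ be the orthogonal projection onto $\bigoplus_{d\ge m}\mathcal{H}_d$, each component $z_{j_1}\cdots z_{j_m}h$ of $S_m h$ is divisible by a degree-$m$ monomial and hence lies in the range of $Q_m$. Thus $\mathrm{diag}_{n^m}(Q_m)\,S_m=S_m$, and substituting this on both sides of the iterated identity yields $\Phi^m(A)=S_m^{*}\,\mathrm{diag}_{n^m}(Q_m A Q_m)\,S_m$. Since $\|S_m\|=\|S_m^{*}\|=1$ and $\|\mathrm{diag}_{n^m}(B)\|=\|B\|$, this produces the key bound $\|\Phi^m(A)\|\le\|Q_m A Q_m\|\le\|A Q_m\|$, valid for every bounded $A$.

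It then remains to show $\|K Q_m\|\to 0$ for compact $K$. First, $Q_m\to 0$ strongly, because $\|Q_m h\|^2=\sum_{d\ge m}\|h_d\|^2$ is the tail of a convergent series. The hard part will be passing from this strong convergence to norm convergence, and this is exactly where compactness enters: the set $\overline{K^{*}(\{\|h\|\le 1\})}$ is compact, strong-operator convergence is uniform on compact sets, so $\|K Q_m\|=\|Q_m K^{*}\|=\sup_{\|h\|\le1}\|Q_m K^{*}h\|\to 0$, and combined with the bound above this gives $\|\Phi^m(K)\|\to 0$. If one prefers to avoid that standard fact, the bound $\|\Phi^m(A)\|\le\|Q_m A Q_m\|$ also permits a reduction to rank-one operators by density of finite-rank operators (using that $\Phi^m$ is linear and contractive): for $A=x\otimes y\colon h\mapsto\langle h,y\rangle x$ one has $Q_m(x\otimes y)Q_m=(Q_m x)\otimes(Q_m y)$, so $\|\Phi^m(x\otimes y)\|\le\|Q_m x\|\,\|Q_m y\|\to 0$, and the general compact case follows by an $\varepsilon/2$ approximation.
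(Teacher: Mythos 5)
Your proof is correct, but it follows a genuinely different route from the paper's. The paper makes two density reductions (compact $\to$ finite-rank $\to$ rank-one with \emph{polynomial} entries $u\otimes v$) and then observes that $T_{\overline{z}^{\alpha}}v=0$ once $|\alpha|$ exceeds the degree of $v$, so that $\Phi^m(u\otimes v)$ is exactly zero for all large $m$; no projection estimate is needed, only the contractivity of $\Phi^m$ and the identity $T_{\overline{z}^{\alpha}}(u\otimes v)T_{z^{\alpha}}=(T_{\overline{z}^{\alpha}}u)\otimes(T_{\overline{z}^{\alpha}}v)$. You instead extract a structural inequality valid for \emph{every} bounded operator, $\|\Phi^m(A)\|\le\|Q_mAQ_m\|$, by factoring $\Phi^m$ through the co-isometry $S_m$ (your bookkeeping there is sound: $S_m^{*}S_m=T_{(|z_1|^2+\cdots+|z_n|^2)^m}=I$ on $\S_n$, and each component $z_{j_1}\cdots z_{j_m}h$ indeed lies in $\bigoplus_{d\ge m}\mathcal{H}_d$), and then let compactness kill $\|KQ_m\|$, either via uniform-on-compacts strong convergence of $Q_m\to 0$ or via your rank-one fallback $\|\Phi^m(x\otimes y)\|\le\|Q_mx\|\,\|Q_my\|$. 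What your approach buys: a quantitative bound that applies to arbitrary bounded operators, and it dispenses with the paper's second density step, since $\|Q_mx\|\to 0$ for every $x\in H^2$, not just polynomials. What the paper's approach buys: it is more elementary (pure algebra of rank-one operators plus contractivity, no spectral/degree decomposition of $H^2$) and it yields the stronger conclusion that $\Phi^m(K)$ vanishes identically for large $m$ when $K$ is finite-rank with polynomial entries. Both arguments exploit the same underlying mechanism, namely that $T_{z^{\alpha}}$ with $|\alpha|=m$ shifts everything up to degree at least $m$.
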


\begin{proof}
Since $\Phi^m$ is a contraction for each $m$ and any compact operator can be approximated in norm by finite-rank operators, it suffices to consider the case when $K$ is a rank-one operator. Write $K=u\otimes v$ for some non-zero vectors $u,v\in H^2$. Here $(u\otimes v)(h) = \langle h,v\rangle u$ for $h\in H^2$. Since polynomials form a dense set in $H^2$, we may assume further that both $u,v$ are polynomials. 

For any multi-index $\alpha$, we have $
T_{\overline{z}^{\alpha}}\big(u\otimes v\big)T_{z^{\alpha}} = \big(T_{\overline{z}^{\alpha}}u \big)\otimes\big(T_{\overline{z}^{\alpha}}v\big).
$
Since $v$ is a polynomial, there exists an integer $m_0$ such that $T_{\overline{z}^{\alpha}}v = 0$ for any  $\alpha$ with $|\alpha|>m_0$. If $m$ is a positive integer, the definition of $\Phi$ shows that $\Phi^{m}(K)=\Phi^{m}(u\otimes v)$ is a finite sum of operators of the form $T_{\overline{z}^{\alpha}}\big(u\otimes v\big)T_{z^{\alpha}}$ with $|\alpha|=m$. This implies that  $\Phi^{m}(K)=0$ for all $m>m_0$. Therefore, $\lim_{m\to\infty}\|\Phi^{m}(K)\|=0$.

Now for $f$ a bounded function on $\S_n$, we have
\begin{align*}
\Phi^m(T_{f}+K) & = \Phi^m(T_{f})+\Phi^m(K) = T_{f} + \Phi^m(K) \longrightarrow T_{f}
\end{align*}
in the norm topology as $m\to\infty$. This shows that $T_{f}+K$ is UAT with asymptotic symbol $f$.
\end{proof}

In dimension one, Theorem \ref{T:Feintuch} shows that the converse of Lemma \ref{L:limitCompact} holds. On the other hand, Theorem \ref{T:Feintuch} fails when $n\geq 2$. We shall show that there exist composition operators that are UAT but cannot be written in the form ``Toeplitz + compact''.

We first show that composition operators cannot be written in the form ``Toeplitz + compact'' except in trivial cases. This generalizes Theorem \ref{T:NS} to all dimensions.
\begin{theorem}\label{T:cptPlusToeplitz}
Let $\varphi$ be an analytic selfmap of $\B_n$ such that $C_{\varphi}$ is bounded on $H^2$. If $C_{\varphi}$ can be written in the form ``Toeplitz + compact'', then either $C_{\varphi}$ is compact or it is the identity operator.
\end{theorem}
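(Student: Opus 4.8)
The plan is to play the \emph{uniform} asymptotic Toeplitzness forced by the hypothesis against the \emph{mean strong} asymptotic Toeplitzness that Corollary \ref{C:MSAT} supplies for free. First I would dispose of the trivial alternative: if $\varphi$ is the identity map of $\B_n$, then $C_{\varphi}=I$ and there is nothing to prove, so I assume from now on that $\varphi$ is not the identity map. Now suppose $C_{\varphi}=T_{f}+K$ with $f\in L^{\infty}$ and $K$ compact. By the last assertion of Lemma \ref{L:limitCompact}, the operator $T_{f}+K$ is UAT with asymptotic symbol $f$; that is,
\begin{equation*}
\Phi^{m}(C_{\varphi})=T_{f}+\Phi^{m}(K)\longrightarrow T_{f}\quad\text{in operator norm as } m\to\infty .
\end{equation*}

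Next I would pass to Ces\`aro means. Since norm convergence of a sequence is inherited by its averages, the uniform convergence above gives
\begin{equation*}
\Big\|\frac{1}{m}\sum_{j=1}^{m}\Phi^{j}(C_{\varphi})-T_{f}\Big\|\leq\frac{1}{m}\sum_{j=1}^{m}\big\|\Phi^{j}(C_{\varphi})-T_{f}\big\|\longrightarrow 0,
\end{equation*}
so the Ces\`aro averages converge to $T_{f}$ in norm, and in particular in the strong operator topology. On the other hand, because $\varphi$ is a non-identity selfmap of $\B_n$ whose composition operator is bounded, Corollary \ref{C:MSAT} asserts that $C_{\varphi}$ is MSAT with asymptotic symbol zero, i.e.\ the very same Ces\`aro averages converge to $0$ in the strong operator topology. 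By uniqueness of strong-operator limits we conclude $T_{f}=0$, hence $f=0$ and $C_{\varphi}=K$ is compact. This yields exactly the stated dichotomy.

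The conceptual point of the argument is the observation that the ``Toeplitz + compact'' hypothesis upgrades the convergence of $\Phi^{m}(C_{\varphi})$ to the uniform topology with limit $T_{f}$, while Corollary \ref{C:MSAT} pins the (necessarily matching) averaged strong limit at $0$; there is no real computation to grind through. I do not expect a serious obstacle here. The only place where genuine analysis is needed has already been absorbed into Corollary \ref{C:MSAT}: via formula \eqref{Eqn:formulaPhi} and the fact that $E(\varphi,\mathrm{id})=\{\zeta:\varphi(\zeta)=\zeta,\ |\varphi(\zeta)|=1\}$ is $\sigma$-null when $\varphi$ is not the identity. One could instead try to argue directly from Proposition \ref{P:SAT_zero_ae} with $\eta$ the identity map and $\psi=\langle\varphi,\mathrm{id}\rangle$, but then the set $\{|\psi|=1\}$ would have to be shown to be null separately; routing through the mean-Toeplitz limit sidesteps this, which is why I would take the MSAT comparison as the main step.
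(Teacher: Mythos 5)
Your proposal is correct and is essentially the same argument as the paper's: both deduce from Lemma \ref{L:limitCompact} that $C_{\varphi}=T_{f}+K$ is UAT with asymptotic symbol $f$, note that this forces the MSAT limit to be $T_{f}$ as well, and then invoke Corollary \ref{C:MSAT} to pin that limit at $0$, giving $f=0$ and $C_{\varphi}=K$ compact. The only difference is cosmetic: you spell out the Ces\`aro-averaging step behind the implication ``UAT $\Rightarrow$ MSAT with the same symbol,'' which the paper states without elaboration.
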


\begin{proof}
Our proof here works also for the one-dimensional case and it is different from Nazarov--Shapiro's approach (see the proof of Theorem 1.1 in \cite{NazarovCVEE2007}). Suppose $C_{\varphi}$ is not the identity and $C_{\varphi} = T_f+K$ for some compact operator $K$ and some bounded function $f$. By Lemma \ref{L:limitCompact}, $C_{\varphi}$ is UAT with asymptotic symbol $f$ on the unit sphere. This then implies that $C_{\varphi}$ is also MSAT with asymptotic symbol $f$. From Corollary \ref{C:MSAT} we know that $C_{\varphi}$, being a non-identity bounded composition operator, is MSAT with asymptotic symbol zero. Therefore $f=0$ a.e. and hence $C_{\varphi}=K$. This completes the proof of the theorem.
\end{proof}

We now provide an example which shows that the converse of Lemma \ref{L:limitCompact} (and hence Theorem \ref{T:Feintuch}) does not hold in higher dimensions.

\begin{example}\label{E:counterexample}
For $z=(z_1,\ldots,z_n)$ in $\B_n$, we define $$\varphi(z)=(\varphi_1(z),\ldots,\varphi_n(z))=(0,z_1,0,\ldots,0).$$ Then $\varphi$ is a linear operator that maps $\B_n$ into itself.  It follows from  \cite[Lemma 8.1]{CowenCRCP1995} that $C_{\varphi}$ is bounded on $H^2$ and $C_{\varphi}^{*} = C_{\psi}$, where $\psi$ is a linear map given by $\psi(z)=(\psi_1(z),\ldots,\psi_n(z))=(z_2,0,\ldots,0)$.

We claim that $\Phi(C_{\varphi})=0$. For $j\neq 2$, $C_{\varphi}T_{z_j}=T_{\varphi_j}C_{\varphi}=0$ since $\varphi_j=0$ for such $j$. Also, $(T_{\overline{z}_2}C_{\varphi})^{*} = C_{\varphi}^{*}T_{z_2} = C_{\psi}T_{z_2} = T_{\psi_2}C_{\psi} = 0.$
Hence $T_{\overline{z}_2}C_{\varphi}=0$.  It follows that $\Phi(C_{\varphi})=T_{\overline{z}_1}C_{\varphi}T_{z_1}+\cdots+T_{\overline{z}_n}C_{\varphi}T_{z_n}=0$, which implies $\Phi^m(C_{\varphi})=0$ for all $m\geq 1$. Thus, $C_{\varphi}$ is UAT with asymptotic symbol zero. 

On the other hand, since $(\varphi\circ\psi)(z) = (0,z_2,0,\ldots 0)$, we conclude that for any non-negative integer $s$,
$$C^{*}_{\varphi}C_{\varphi}(z_2^{s}) = C_{\psi}C_{\varphi}(z_2^{s}) = C_{\varphi\circ\psi}(z_2^{s})=z_2^{s}.$$ This shows that the restriction of $C_{\varphi}$ on the infinite dimensional subspace spanned by $\{1,z_2,z_2^2,z_2^3,\ldots\}$ is an isometric operator. As a consequence, $C_{\varphi}$ is not compact on $H^2$. Theorem \ref{T:cptPlusToeplitz} now implies that $C_{\varphi}$ is not of the form ``Toeplitz + compact'' either.
\end{example}

Theorem \ref{T:NS} shows that on the Hardy space of the unit disk, a composition operator $C_{\varphi}$ is UAT if and only if it is either a compact operator or the identity. Example \ref{E:counterexample} shows that in dimensions $n\geq 2$, there exists a non-compact, non-identity composition operator which is UAT. It turns out that there are many more such composition operators. In the rest of the section, we study uniform asymptotic Toeplitzness of composition operators induced by linear selfmaps of $\B_n$.

We begin with a proposition which gives a lower bound for the norm of the product $T_{f}C_{\varphi}$ when $\varphi$ satisfies certain conditions. This estimate will later help us show that certain composition operators are not UAT.

\begin{proposition}
\label{P:normTC}
Let $\varphi$ be an analytic selfmap of $\B_n$ such that $C_{\varphi}$ is bounded. Suppose there are points $\zeta, \eta\in\S_n$ so that $\langle\varphi(z),\eta\rangle=\langle z,\zeta\rangle$ for a.e. $z\in\S_n$. Let $f$ be a bounded function on $\S_n$ which is continuous at $\zeta$. Then we have 
\begin{align*}
\|T_{f}C_{\varphi}\| \geq |f(\zeta)|.
\end{align*}
\end{proposition}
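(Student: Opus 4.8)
The plan is to bound $\|T_f C_\varphi\|$ from below by testing it against a well-chosen family of unit vectors, namely normalized reproducing kernels, and then letting these kernels concentrate at the boundary point $\zeta$. First I would promote the hypothesized boundary identity to the interior: since $\eta$ is fixed, the function $z\mapsto\langle\varphi(z),\eta\rangle=\sum_j\varphi_j(z)\overline{\eta}_j$ is bounded and analytic on $\B_n$ (its modulus is at most $|\varphi(z)|\,|\eta|\le 1$), and $z\mapsto\langle z,\zeta\rangle$ is a polynomial; the two have the same radial boundary values a.e.\ on $\S_n$ by assumption, so they agree as elements of $H^2$ and therefore $\langle\varphi(z),\eta\rangle=\langle z,\zeta\rangle$ for \emph{every} $z\in\B_n$. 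Recalling that the reproducing kernel of $H^2$ is $K_a(z)=(1-\langle z,a\rangle)^{-n}$ with $\|K_a\|^2=K_a(a)=(1-|a|^2)^{-n}$, the key observation is that $C_\varphi$ carries $K_{\rho\eta}$ exactly to $K_{\rho\zeta}$: for $0<\rho<1$ and $z\in\B_n$,
$$(C_\varphi K_{\rho\eta})(z)=K_{\rho\eta}(\varphi(z))=\big(1-\rho\langle\varphi(z),\eta\rangle\big)^{-n}=\big(1-\rho\langle z,\zeta\rangle\big)^{-n}=K_{\rho\zeta}(z).$$

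Next I would compute the relevant matrix coefficient. Using that $K_{\rho\zeta}\in H^2$ and that $P$ is the orthogonal projection onto $H^2$,
$$\big\langle T_f C_\varphi K_{\rho\eta},\,K_{\rho\zeta}\big\rangle=\big\langle P(fK_{\rho\zeta}),\,K_{\rho\zeta}\big\rangle=\big\langle fK_{\rho\zeta},\,K_{\rho\zeta}\big\rangle=\int_{\S_n}f\,|K_{\rho\zeta}|^2\,d\sigma.$$
Since $\|K_{\rho\eta}\|=\|K_{\rho\zeta}\|=(1-\rho^2)^{-n/2}$, the elementary estimate $\|A\|\ge|\langle Ah,g\rangle|/(\|h\|\,\|g\|)$ applied with $A=T_f C_\varphi$, $h=K_{\rho\eta}$, $g=K_{\rho\zeta}$ gives
$$\|T_f C_\varphi\|\ \ge\ \left|\int_{\S_n}f\,|k_{\rho\zeta}|^2\,d\sigma\right|,\qquad k_{\rho\zeta}:=K_{\rho\zeta}/\|K_{\rho\zeta}\|.$$

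Finally I would let $\rho\uparrow 1$. The measures $|k_{\rho\zeta}|^2\,d\sigma$ are probability measures on $\S_n$, and the heart of the argument is that they concentrate at the single point $\zeta$: away from $\zeta$ the quantity $|1-\rho\langle\cdot,\zeta\rangle|$ stays bounded below, so $|k_{\rho\zeta}|^2=(1-\rho^2)^n\,|1-\rho\langle\cdot,\zeta\rangle|^{-2n}\to 0$ uniformly there, while the total mass remains equal to $1$. Combining this with the continuity of $f$ at $\zeta$ — splitting the integral into a small nonisotropic ball about $\zeta$, where $|f-f(\zeta)|$ is small, and its complement, whose mass tends to $0$ — yields $\int_{\S_n}f\,|k_{\rho\zeta}|^2\,d\sigma\to f(\zeta)$, and passing to the limit gives $\|T_f C_\varphi\|\ge|f(\zeta)|$. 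The step to get right is precisely this concentration: one must verify that the kernel peaks at the \emph{point} $\zeta$ rather than merely on the circle $\{\lambda\zeta:\lambda\in\mathbb{T}\}$. This is exactly why reproducing kernels $K_{\rho\zeta}$ are the correct test functions, in contrast to powers $\langle\cdot,\zeta\rangle^s$, whose squared modulus is invariant under $\zeta'\mapsto\lambda\zeta'$ and would only recover a circular average of $f$ instead of the value $f(\zeta)$.
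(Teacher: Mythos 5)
Your proof is correct, and it follows the same overall skeleton as the paper's: a lower bound on $\|T_fC_\varphi\|$ via a matrix coefficient against test vectors that $C_\varphi$ intertwines (carrying a peak function anchored at $\eta$ to the corresponding one anchored at $\zeta$), followed by a concentration-plus-continuity argument splitting $\S_n$ into a small neighborhood of $\zeta$ and its complement. The difference is the choice of test family. The paper uses the binomial peak functions $g_s(z)=\big(1+\langle z,\eta\rangle\big)^s$ and $h_s=C_\varphi g_s=\big(1+\langle \cdot,\zeta\rangle\big)^s$; since $|1+\langle z,\zeta\rangle|$ attains its maximum $2$ only at $z=\zeta$, these also peak at the single point $\zeta$ (so your closing caveat about $\langle\cdot,\zeta\rangle^s$ explains precisely why the paper inserts the ``$1+$'', rather than pointing to a defect in the paper's choice). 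You instead use normalized reproducing kernels $k_{\rho\zeta}$. Each choice buys something: your route requires promoting the a.e.\ boundary identity $\langle\varphi(z),\eta\rangle=\langle z,\zeta\rangle$ to all of $\B_n$ (which you justify correctly, via uniqueness of $H^2$ functions with given boundary values) because $C_\varphi K_{\rho\eta}$ is computed by composition in the interior, whereas the paper works directly with a.e.\ boundary values; in exchange you get the explicit norm formula $\|K_{\rho\eta}\|=\|K_{\rho\zeta}\|=(1-\rho^2)^{-n/2}$, avoiding the paper's rotation-invariance argument for $\|h_s\|=\|g_s\|$, and the concentration of the probability measures $|k_{\rho\zeta}|^2\,d\sigma$ at $\zeta$ is a standard fact with a clean quantitative proof (on $\{|z-\zeta|\ge\delta\}$ one has $|1-\rho\langle z,\zeta\rangle|\ge\rho\delta^2/2$, so $|k_{\rho\zeta}|^2\to 0$ uniformly there), while the paper's corresponding step needs the slightly more ad hoc two-neighborhood comparison of suprema and infima of $|1+\langle z,\zeta\rangle|$.
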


\begin{proof}
For an integer $s\geq 1$, put $g_{s}(z)=\big(1+\langle z,\eta\rangle\big)^{s}$ and $h_{s}=C_{\varphi}g_{s}$. Then for a.e. $z\in\S_n$,
\begin{align*}
h_s(z) & = g_s(\varphi(z)) = \big(1+\langle \varphi(z),\eta\rangle\big)^{s} = \big(1+\langle z,\zeta\rangle\big)^{s}.
\end{align*}
Because of the rotation-invariance of the surface measure on $\S_n$, we see that $\|h_s\|=\|g_s\|$. Now, we have
\begin{align*}
\|T_{f}C_{\varphi}\| \geq \dfrac{|\langle T_{f}C_{\varphi}g_s,h_s\rangle|}{\|g_s\|\|h_s\|} & = \dfrac{|\langle T_{f}h_s,h_s\rangle|}{\|g_s\|\|h_s\|} = \dfrac{|\langle fh_s,h_s\rangle|}{\|h_s\|^2}\\
& = \Big|\dfrac{\int_{\S_n}f(z)|1+\langle z,\zeta\rangle|^{2s}\, d\sigma(z)}{\int_{\S_n}|1+\langle z,\zeta\rangle|^{2s}\,d\sigma(z)}\Big|.
\end{align*}
We claim that the limit as $s\to\infty$ of the quantity inside the absolute value is $f(\zeta)$. From this the conclusion of the proposition follows.

To prove the claim we consider
\begin{align}
\label{Eqn:limit_estimate}
& \Big|\dfrac{\int_{\S_n}f(z)|1+\langle z,\zeta\rangle|^{2s}\, d\sigma(z)}{\int_{\S_n}|1+\langle z,\zeta\rangle|^{2s}\,d\sigma(z)} - f(\zeta)\Big| \leq \dfrac{\int_{\S_n}|f(z)-f(\zeta)|\cdot|1+\langle z,\zeta\rangle|^{2s}\, d\sigma(z)}{\int_{\S_n}|1+\langle z,\zeta\rangle|^{2s}\,d\sigma(z)}\notag\\
& \quad\quad = \dfrac{\big(\int_{\mathcal U}+\int_{\S_n\backslash\mathcal{U}}\big)|f(z)-f(\zeta)|\cdot|1+\langle z,\zeta\rangle|^{2s}\, d\sigma(z)}{\int_{\S_n}|1+\langle z,\zeta\rangle|^{2s}\,d\sigma(z)}\notag\\
& \quad\quad \leq\sup_{z\in\mathcal{U}}|f(z)-f(\zeta)|+2\|f\|_{\infty}\dfrac{\int_{\S_n\backslash\mathcal{U}}|1+\langle z,\zeta\rangle|^{2s}\, d\sigma(z)}{\int_{\S_n}|1+\langle z,\zeta\rangle|^{2s}\,d\sigma(z)},
\end{align}
where $\mathcal{U}$ is any open neighborhood of $\zeta$ in $\S_n$. By the continuity of $f$ at $\zeta$, the first term in \eqref{Eqn:limit_estimate} can be made arbitrarily small by choosing an appropriate $\mathcal{U}$. For such a $\mathcal{U}$, we may choose another open neighborhood $\mathcal{W}$ of $\zeta$ with $\mathcal{W}\subseteq\mathcal{U}$ such that
\begin{align*}
\sup\big\{|1+\langle z,\zeta\rangle|: \, z\in\S_{n}\backslash\mathcal{U}\big\} \,<\, \inf\big\{|1+\langle z,\zeta\rangle|: \, z\in\mathcal{W}\big\}.
\end{align*}
This shows that the second term in \eqref{Eqn:limit_estimate} converges to $0$ as $s\to\infty$. The claim then follows.
\end{proof}

Using Proposition \ref{P:normTC}, we give a sufficient condition under which $C_{\varphi}$ fails to be UAT.
\begin{proposition}
\label{P:nonUAT}
Let $\varphi$ be a non-identity analytic selfmap of $\B_n$ such that $C_{\varphi}$ is bounded. Suppose that $\varphi$ is continuous on $\overline{\B}_n$ and there is a point $\zeta\in\S_n$ and a unimodular complex number $\lambda$ so that $\langle\varphi(z),\zeta\rangle=\lambda\langle z,\zeta\rangle$ for all $z\in\S_n$. Then $C_{\varphi}$ is not UAT.
\end{proposition}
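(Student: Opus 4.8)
The plan is to combine formula \eqref{Eqn:formulaPhi} with the lower bound of Proposition \ref{P:normTC}. Taking $\eta$ to be the identity map and $g\equiv 1$ in \eqref{Eqn:formulaPhi}, we get $\Phi^m(C_\varphi)=T_{w^m}C_\varphi$, where $w$ is the function $w(\xi)=\langle\varphi(\xi),\xi\rangle$ on $\S_n$. Since $\varphi$ is continuous on $\overline{\B}_n$, the function $w$, and hence every power $w^m$, is continuous on $\S_n$. Because $\varphi$ is not the identity map, Corollary \ref{C:MSAT} tells us that $C_\varphi$ is MSAT with asymptotic symbol zero. Consequently, if $C_\varphi$ were UAT, then $\Phi^m(C_\varphi)$ would converge in norm, hence in the strong operator topology, to some limit $A_\infty$; passing to Cesàro means (which converge in the strong operator topology to the same limit), this limit would coincide with the MSAT limit, namely the zero operator, exactly as in the reduction used in the proof of Theorem \ref{T:cptPlusToeplitz}. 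Thus it suffices to show that $\|\Phi^m(C_\varphi)\|=\|T_{w^m}C_\varphi\|$ does \emph{not} tend to zero.

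To produce a lower bound, I would rewrite the hypothesis $\langle\varphi(z),\zeta\rangle=\lambda\langle z,\zeta\rangle$ as $\langle\varphi(z),\zeta\rangle=\langle z,\overline{\lambda}\zeta\rangle$, valid for all $z\in\S_n$. Since $|\lambda|=1$, the point $\xi_0:=\overline{\lambda}\zeta$ lies on $\S_n$. Applying Proposition \ref{P:normTC} with the boundary points $\xi_0$ and $\zeta$ playing the roles of its ``$\zeta$'' and ``$\eta$'' respectively, and with $f=w^m$ (continuous on all of $\S_n$), we obtain
\begin{align*}
\|T_{w^m}C_\varphi\|\ \geq\ |w^m(\xi_0)|\ =\ |w(\xi_0)|^{m}.
\end{align*}
It then remains only to evaluate $w(\xi_0)$.

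The crucial computation is that $|w(\xi_0)|=1$. Substituting $z=\xi_0=\overline{\lambda}\zeta$ into the identity $\langle\varphi(z),\zeta\rangle=\lambda\langle z,\zeta\rangle$ gives $\langle\varphi(\xi_0),\zeta\rangle=\lambda\langle\overline{\lambda}\zeta,\zeta\rangle=\lambda\overline{\lambda}=1$. Since continuity of $\varphi$ on $\overline{\B}_n$ forces $|\varphi(\xi_0)|\leq 1$, the Cauchy--Schwarz inequality gives $1=|\langle\varphi(\xi_0),\zeta\rangle|\leq|\varphi(\xi_0)|\,|\zeta|\leq 1$; the equality case, together with $\langle\varphi(\xi_0),\zeta\rangle=1$, forces $\varphi(\xi_0)=\zeta$. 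Therefore $w(\xi_0)=\langle\varphi(\xi_0),\xi_0\rangle=\langle\zeta,\overline{\lambda}\zeta\rangle=\lambda$, which is unimodular. Hence $\|\Phi^m(C_\varphi)\|\geq|w(\xi_0)|^{m}=1$ for every $m$, contradicting the fact that this quantity must tend to zero, and so $C_\varphi$ is not UAT.

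The main obstacle is isolating the correct test point $\xi_0=\overline{\lambda}\zeta$ and verifying that $w$ is unimodular there: the bound from Proposition \ref{P:normTC} decays like $|w(\xi_0)|^{m}$, so the argument is vacuous unless $|w(\xi_0)|=1$, and it is precisely the Cauchy--Schwarz equality step that pins down $\varphi(\xi_0)=\zeta$ and thereby the unimodularity. A secondary point requiring care is the reduction in the first paragraph, namely that the only admissible UAT limit is the zero operator; this rests on matching the UAT asymptotic symbol with the MSAT symbol furnished by Corollary \ref{C:MSAT}.
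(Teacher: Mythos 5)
Your proof is correct and follows essentially the same route as the paper's: reduce via Corollary \ref{C:MSAT} (UAT limit must agree with the MSAT limit, which is zero) to showing that $\|\Phi^m(C_\varphi)\|=\|T_{w^m}C_\varphi\|$ does not tend to zero, and then obtain the lower bound $1$ from Proposition \ref{P:normTC} applied to the identity $\langle\varphi(z),\zeta\rangle=\lambda\langle z,\zeta\rangle$. The only difference is in the assignment of roles when invoking Proposition \ref{P:normTC}: the paper takes the test point to be $\zeta$ itself (with $\eta=\lambda\zeta$), so that $w(\zeta)=\lambda\langle\zeta,\zeta\rangle=\lambda$ is immediate from the hypothesis, whereas your choice of test point $\overline{\lambda}\zeta$ (with $\eta=\zeta$) forces you to run the Cauchy--Schwarz equality argument to pin down $\varphi(\overline{\lambda}\zeta)=\zeta$ --- a correct but avoidable extra step.
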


\begin{proof}
Since $\varphi$ is a non-identity map, Corollary \ref{C:MSAT} shows that $C_{\varphi}$ is MSAT with asymptotic symbol zero. To prove that $C_{\varphi}$ is not UAT, it suffices to show that $C_{\varphi}$ is not UAT with asymptotic symbol zero. 

Let $f(z)=\langle\varphi(z),z\rangle$ for $z\in\S_n$. By the hypothesis, the function $f$ is continuous on $\S_n$ and $f(\zeta)=\langle\varphi(\zeta),\zeta\rangle = \lambda\langle\zeta,\zeta\rangle=\lambda$. For any positive integer $m$, formula \eqref{Eqn:formulaPhi} gives $\Phi^m(C_{\varphi}) = T_{f^m}C_{\varphi}$. Since $\varphi$ satisfies the hypothesis of Proposition \ref{P:normTC} with $\eta=\lambda\zeta$ and $f^m$ is continuous at $\zeta$, we may apply Proposition \ref{P:normTC} to conclude that
\begin{align*}
\|\Phi^m(C_{\varphi})\| = \|T_{f^m}C_{\varphi}\| & \geq |f^m(\zeta)| = 1.
\end{align*}
This implies that $C_{\varphi}$ is not UAT with asymptotic symbol zero, which is what we wished to prove.
\end{proof}

Our last result in the paper provides necessary and sufficient conditions for a class of composition operators to be UAT.
\begin{theorem}
Let $\varphi(z)=Az$ where $A:\mathbb{C}^n\rightarrow\mathbb{C}^n$ is a non-identity linear map with $\|A\|\leq 1$. Then $C_{\varphi}$ is UAT if and only if all eigenvalues of $A$ lie inside the open unit disk.
\end{theorem}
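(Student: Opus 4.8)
The plan is to reduce the entire statement to a single scalar invariant of $A$, namely its numerical radius. Setting $f(z)=\langle Az,z\rangle$ for $z\in\S_n$ and taking $\eta$ to be the identity map and $g\equiv 1$ in formula \eqref{Eqn:formulaPhi}, I get $\Phi^m(C_\varphi)=T_{f^m}C_\varphi$ for every $m\geq 1$. Since $f$ is continuous, $\|f\|_\infty=\sup_{\zeta\in\S_n}|\langle A\zeta,\zeta\rangle|$ is exactly the numerical radius $w(A)$. The hypothesis $\|A\|\leq 1$ forces $w(A)\leq\|A\|\leq 1$ and forces every eigenvalue of $A$ to have modulus at most $1$; hence the eigenvalue condition of the theorem (all eigenvalues in the open disk) is equivalent to the absence of an eigenvalue of modulus exactly $1$. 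The heart of the argument is therefore the equivalence between $w(A)=1$ and the existence of an eigenvalue of $A$ of modulus $1$.

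To establish that equivalence I would argue as follows. If $A\zeta_0=\lambda\zeta_0$ with $\zeta_0\in\S_n$ and $|\lambda|=1$, then $f(\zeta_0)=\lambda$, so $\|f\|_\infty\geq 1$ and, combined with $w(A)\leq 1$, we get $w(A)=1$. Conversely, if $w(A)=1$ then, the numerical range being compact in finite dimensions, the supremum is attained at some unit vector $\zeta$, whence $1=|\langle A\zeta,\zeta\rangle|\leq\|A\zeta\|\leq\|A\|\leq 1$. Every inequality is then an equality; the equality in Cauchy--Schwarz, $|\langle A\zeta,\zeta\rangle|=\|A\zeta\|\,\|\zeta\|$, forces $A\zeta=c\zeta$ for some scalar $c$, and then $|c|=\|A\zeta\|=1$, exhibiting an eigenvalue of modulus $1$. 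This Cauchy--Schwarz equality analysis is the only genuinely delicate point; everything else is bookkeeping on top of \eqref{Eqn:formulaPhi} and Proposition \ref{P:nonUAT}.

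For sufficiency, assume all eigenvalues lie in the open unit disk, so by the above $w(A)<1$, i.e.\ $\|f\|_\infty<1$. Then $\|\Phi^m(C_\varphi)\|=\|T_{f^m}C_\varphi\|\leq\|f\|_\infty^{\,m}\,\|C_\varphi\|\to 0$ as $m\to\infty$, so $C_\varphi$ is UAT, necessarily with asymptotic symbol zero in agreement with Corollary \ref{C:MSAT}.

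For necessity, suppose $A$ has an eigenvalue $\lambda$ with $|\lambda|=1$. Then $A^*$ has the eigenvalue $\overline{\lambda}$; let $\zeta\in\S_n$ be a corresponding unit eigenvector, so $A^*\zeta=\overline{\lambda}\,\zeta$. For every $z$ we then have $\langle\varphi(z),\zeta\rangle=\langle Az,\zeta\rangle=\langle z,A^*\zeta\rangle=\lambda\langle z,\zeta\rangle$. Since $\varphi$ is a non-identity linear selfmap, and hence continuous on $\overline{\B}_n$, the hypotheses of Proposition \ref{P:nonUAT} are met with this $\zeta$ and this unimodular $\lambda$, and we conclude that $C_\varphi$ is not UAT. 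Combining the two directions with the spectral reformulation of the first paragraph yields the stated characterization.
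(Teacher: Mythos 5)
Your proposal is correct and follows essentially the same route as the paper: both directions rest on the identity $\Phi^m(C_\varphi)=T_{f^m}C_\varphi$ with $f(z)=\langle Az,z\rangle$, the Cauchy--Schwarz equality analysis showing $\|f\|_\infty<1$ exactly when no eigenvalue is unimodular, and Proposition \ref{P:nonUAT} applied with a vector $\zeta$ satisfying $\langle\varphi(z),\zeta\rangle=\lambda\langle z,\zeta\rangle$. The only (harmless) deviation is in producing that $\zeta$: you take an eigenvector of $A^{*}$ for $\overline{\lambda}$ via the conjugate-spectrum fact, whereas the paper shows, using $\|A\|\leq 1$, that the eigenvector of $A$ for $\lambda$ itself satisfies $A^{*}\zeta=\overline{\lambda}\zeta$; both are valid.
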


\begin{proof}
Since $\|A\|\leq 1$, all eigenvalues of $A$ lie inside the closed unit disk. We first show that if $A$ has an eigenvalue $\lambda$ with $|\lambda|=1$, then $C_{\varphi}$ is not UAT. Let $\zeta\in\S_n$ be an eigenvector of $A$ corresponding to $\lambda$. We claim that $A^{*}\zeta = \overline{\lambda}\zeta$. In fact, we have
\begin{align*}
|(A^{*}-\overline{\lambda})\zeta|^2 & = |A^{*}\zeta|^2-2\Re\langle A^{*}\zeta,\overline{\lambda}\zeta\rangle+|\overline{\lambda}\zeta|^2 \\
& = |A^{*}\zeta|^2 - 2\Re\langle\zeta,\overline{\lambda}A\zeta\rangle+|\zeta|^2\\
& = |A^{*}\zeta|^2 - 2\Re\langle\zeta,\overline{\lambda}\lambda\zeta\rangle+|\zeta|^2\\
& =|A^{*}\zeta|^2-1\leq 0.
\end{align*}
This forces $A^{*}\zeta=\overline{\lambda}\zeta$ as claimed. As a result, for $z\in\S_n$, we have $$\langle\varphi(z),\zeta\rangle = \langle Az,\zeta\rangle = \langle z,A^{*}\zeta\rangle = \langle z,\overline{\lambda}\zeta\rangle = \lambda\langle z,\zeta\rangle.$$
We then apply Proposition \ref{P:nonUAT} to conclude that $C_{\varphi}$ is not UAT.

We now show that if all eigenvalues of $A$ lie inside the open unit disk then $C_{\varphi}$ is UAT. Put $f(z)=\langle\varphi(z),z\rangle = \langle Az,z\rangle$ for $z\in\S_n$. Since $|Az|\leq 1$ and $Az$ is not a unimodular multiple of $z$, we see that $|f(z)|<1$ for $z\in\S_n$. Since $f$ is continuous and $\S_n$ is compact, we have $\|f\|_{L^{\infty}(\S_n)}<1$. For any integer $m\geq 1$, formula \eqref{Eqn:formulaPhi} gives
\begin{align*}
\|\Phi^m(C_{\varphi})\| & = \|T_{f^m}C_{\varphi}\|\leq \|T_{f^m}\|\|C_{\varphi}\| \leq (\|f\|_{L^{\infty}(\S_n)})^{m}\|C_{\varphi}\|.
\end{align*}
Since $\|f\|_{L^{\infty}(\S_n)}<1$, we conclude that $\lim_{m\to\infty}\|\Phi^m(C_{\varphi})\|=0$. Therefore, $C_{\varphi}$ is UAT with asymptotic symbol zero.
\end{proof}

\subsection*{Acknowledgements} We wish to thank the referee for useful suggestions that improved the presentation of the paper.

\end{document}